\documentclass[11pt]{article}
\usepackage{PRIMEarxiv}

\usepackage{amsmath}
\usepackage{amssymb}
\usepackage{amsfonts}
\usepackage{amsthm}
\usepackage{standalone}
\usepackage{float}
\usepackage{graphicx}
\usepackage{epstopdf}
\usepackage{hyperref}
\usepackage{xr}
\usepackage{algorithm}
\usepackage{algpseudocode}
\usepackage{cleveref}
\usepackage{tabularx}
\usepackage{booktabs}
\usepackage{tikz}
\usetikzlibrary{shapes.geometric, arrows.meta, positioning, calc}

\tikzset{
    startend/.style = {rectangle, rounded corners, minimum width=3cm, minimum height=1cm, text centered, draw=black, fill=green!15},
    process/.style = {rectangle, rounded corners, minimum width=3cm, minimum height=1cm, text centered, text width=4.5cm, draw=black, fill=blue!15},
    decision/.style = {rectangle, rounded corners, minimum width=3cm, minimum height=1cm, text centered, text width=4cm, draw=black, fill=gray!10},
    io/.style = {rectangle, rounded corners, minimum width=3cm, minimum height=1cm, text centered, text width=4cm, draw=black, fill=red!15},
    arrow/.style = {thick, ->, >=Stealth}
}

\ifpdf
  \DeclareGraphicsExtensions{.eps,.pdf,.png,.jpg}
\else
  \DeclareGraphicsExtensions{.eps}
\fi

\newtheorem{theorem}{Theorem}

\newtheorem{assumption}{Assumption}
\newtheorem{remark}{Remark}

\theoremstyle{definition}

\crefname{hypothesis}{Hypothesis}{Hypotheses}
\crefname{fact}{Fact}{Facts}
\crefname{assumption}{Assumption}{Assumptions}
\Crefname{assumption}{Assumption}{Assumptions}
\crefname{algorithm}{algorithm}{algorithms}
\Crefname{algorithm}{Algorithm}{Algorithms}

\usepackage{amsopn}

\ifpdf
\hypersetup{
  pdftitle={SVD-Preconditioned Gradient Descent Method for Solving Nonlinear Least Squares Problems},
  pdfauthor={Zhipeng Chang, Wenrui Hao, Nian Liu}
}
\fi

\title{SVD-Preconditioned Gradient Descent Method for Solving Nonlinear Least Squares Problems}

\author{
  Zhipeng Chang \\
  Department of Mathematics \\
  Penn State University \\
  University Park, PA 16802, USA \\
  \texttt{zfc5231@psu.edu}
  \And
  Wenrui Hao$^*$ \\
  Department of Mathematics \\
  Penn State University \\
  University Park, PA 16802, USA \\
  \texttt{wxh64@psu.edu}
  \And
  Nian Liu \\
  Department of Mathematics \\
  Penn State University \\
  University Park, PA 16802, USA \\
  \texttt{nkl5330@psu.edu}
}

\fancypagestyle{firstpage}{%
  \fancyhf{}

  \lfoot{\footnotesize\textit{Submitted to Foundations of Computational Mathematics, \today.}\\$^*$Corresponding author.}
  \rfoot{\footnotesize\thepage}
}

\begin{document}

\maketitle
\thispagestyle{firstpage}

\begin{abstract}
This paper introduces a novel optimization algorithm designed for nonlinear least-squares problems. The method is derived by preconditioning the gradient descent direction using the Singular Value Decomposition (SVD) of the Jacobian. This SVD-based preconditioner is then integrated with the first- and second-moment adaptive learning rate mechanism of the Adam optimizer. We establish the local linear convergence of the proposed method under standard regularity assumptions and prove global convergence for a modified version of the algorithm under suitable conditions. The effectiveness of the approach is demonstrated experimentally across a range of tasks, including function approximation, partial differential equation (PDE) solving, and image classification on the CIFAR-10 dataset. Results show that the proposed method consistently outperforms standard Adam, achieving faster convergence and lower error in both regression and classification settings.

\end{abstract}

\keywords{Gradient descent \and Preconditioning \and Convergence analysis \and Nonlinear Least Squares}

\noindent\textbf{MSC2020:} 65K10, 90C30, 65F08, 68T07

\section{Introduction}
Optimization is a central object of study in computational mathematics, underpinning the analysis and numerical solution of a wide range of problems in scientific computing. Many numerical algorithms, including those arising from discretizations of partial differential equations (PDEs), inverse problems, and data-driven models, can be naturally formulated as optimization problems. Classical methods such as gradient descent~\cite{ruder2016overview,hao2021gradient}, Newtonâ€™s method~\cite{hao2024gauss}, and quasi-Newton methods~\cite{nocedal-2006-numerical_optimization,hao2025efficient} (e.g., L-BFGS~\cite{liu-1989-LBFGS}) form the theoretical backbone of numerical optimization and have been studied extensively from analytical and algorithmic perspectives.

In recent years, renewed attention has been given to Newton-type methods and gradient flow formulations in contexts motivated by scientific machine learning, inverse problems, and variational models. Randomized Newton and Gauss--Newton methods have been proposed to reduce computational complexity while retaining essential curvature information~\cite{hao2024gauss,chen2022randomized}. Related work has explored energy-based formulations and greedy strategies to improve stability and approximation properties of neural network discretizations for PDEs~\cite{siegel2021greedy}. More broadly, these developments reflect an ongoing effort to design optimization algorithms that respect the analytical structure of the underlying problem rather than relying solely on heuristic modifications.

From a foundational perspective, second-order methods, including Newtonâ€™s method, quasi-Newton schemes~\cite{nocedal-2006-numerical_optimization}, and natural gradient descent~\cite{amari-1998-natural_gradient,muller2023achieving}, achieve rapid local convergence by incorporating curvature and geometric information of the objective functional. However, the explicit construction and storage of second-order operators are often computationally prohibitive, particularly for high-dimensional problems or in the presence of ill-conditioned Jacobians. Moreover, for nonconvex objectives, these methods may exhibit sensitivity to initialization and lack robust global behavior.

By contrast, first-order methods are attractive due to their simplicity and low per-iteration cost, but they typically converge slowly in the presence of strong anisotropy or poor conditioning \cite{park2020additive}. This trade-off has motivated the development of approximate second-order techniques that seek to capture essential spectral information while remaining computationally feasible. Adaptive gradient methods, such as 
Additive Schwarz methods \cite{park2020additive,park2021accelerated},
AdaGrad~\cite{duchi-2011-adagrad}, RMSProp, and Adam~\cite{kingma-2017-adam}, introduce diagonal or low-complexity preconditioning based on gradient statistics. More structured approaches, including K-FAC~\cite{martens-2015-KFAC} and Shampoo~\cite{gupta-2018-shampoo}, exploit tensor and Kronecker product structures. While effective in certain regimes, these methods are typically motivated by empirical considerations and provide limited insight into the interaction between algorithmic design and the analytical structure of the underlying optimization problem.

Motivated by these considerations, we propose a new optimization framework for least-squares problems grounded in a dynamical systems and operator-theoretic perspective. Rather than modifying classical descent directions heuristically, the proposed method is derived by coupling gradient flow dynamics with a preconditioned operator constructed from a singular value decomposition (SVD) of the local Jacobian. This construction yields a descent direction that selectively incorporates dominant singular modes, providing a principled mechanism for addressing ill-conditioning and anisotropy inherent in least-squares formulations. In addition, adaptive step-size and momentum mechanisms are introduced within this framework to enhance robustness while preserving the underlying continuous-time interpretation.

The proposed approach provides a mathematically transparent link between first- and second-order optimization dynamics by incorporating problem-dependent spectral information through an operator-based preconditioner. Although the method is motivated by least-squares problems, the underlying framework is not restricted to this setting and can be naturally extended to more general nonlinear optimization and variational formulations, including cross-entropy classifications. In particular, the dynamical systems viewpoint and the use of local spectral information offer a unifying perspective for a broad class of optimization problems arising in inverse problems and scientific computing.

The remainder of this paper is organized as follows. In~\cref{sec:numerical method}, we derive the proposed method from a continuous-time perspective and present its discrete numerical formulation, together with an analysis of local convergence. In~\cref{sec:adam framework with spgd}, we extend the framework to an Adam-type setting and analyze the global convergence of the resulting coupled algorithm. In~\cref{subsec:function_approximation,subsec:poisson_equation,subsec:public_dataset}, we demonstrate the efficiency of the proposed method through several representative numerical examples.

\section{SVD-Preconditioned Gradient Descent (SPGD) method}
\label{sec:numerical method}
In this section, we consider the following nonlinear least squares problem
\begin{equation}\label{equ:objective function}
\min_{\theta\in\mathbb{R}^m} f(\theta) := \frac{1}{2}\|F(\theta)\|_2^2,
\end{equation}
where $\theta\in \mathbb{R}^m$ and $ F: \mathbb{R}^m\rightarrow \mathbb{R}^n$. A common approach to solving this problem is to follow the continuous gradient flow, which is given by
\begin{equation}\label{equ:classical gradient flow}
    \frac{\mathrm{d}\theta}{\mathrm{d}t}=-J_F^{\top}(\theta)F(\theta),
\end{equation}
where $J_F (\theta)$ denotes the Jacobian matrix of $F$ with respect to $\theta$. The gradient flow ensures the monotonic decrease of $\|F(\theta)\|_2^2$ along the trajectory $t\mapsto \theta(t)$. However, its convergence tends to be slow near a stationary point $\theta^{*}$ satisfying $J_F^{\top}(\theta^*)F(\theta^*)$=0, since the linearized dynamics around $\theta^*$ might be  governed by the ill-conditioned Fisher Information matrix $J_F^{\top}(\theta^*)J_F(\theta^*).$ As established in \cite{karakida-2019-universal_statistics_of_FIM}, such matrices in deep networks inherently exhibit a pathological eigenvalue spectrum, resulting in severe ill-conditioning.

\subsection{Method derivation}
In many scientific and engineering applications, nonlinear least-squares formulations arise naturally from the discretization of governing equations. As a representative example, consider a discretized nonlinear partial differential equation (PDE) of the form
$F(\theta) = \Delta_h \theta + g(\theta),$
where $\theta$ denotes a numerical approximation to the solution of the continuous problem $-\Delta u = g(u)$, and $\Delta_h$ is a discrete Laplacian operator obtained, for example, via finite difference or finite element methods~\cite{grossmann-2007-numerical_treatment}.  
In this setting, one can also consider the corresponding \emph{physical flow}, which takes the form of a parabolic evolution equation:
\begin{equation}\label{equ:physical flow}
    \frac{\mathrm{d}\theta}{\mathrm{d}t} = F(\theta), \quad \theta(0) = \theta_0.
\end{equation}
Here, $F$ may represent a more general operator incorporating transport, diffusion, or reaction mechanisms. The steady state $\theta^*$ of~\cref{equ:physical flow} satisfies $F(\theta^*) = 0$, and thus corresponds to a minimizer of the nonlinear least-squares problem~\cref{equ:objective function}.  
In practice, such steady states can often be obtained by numerically integrating the discretized physical flow~\cref{equ:physical flow} until it reaches its long-time limit, provided the solution converges. This approach is frequently more efficient than the residual-based gradient flow~\cref{equ:classical gradient flow}, as it evolves according to the intrinsic physical dynamics encoded in $F$, rather than enforcing a purely monotonic decay of $\|F(\theta)\|$. 

A natural question is how the convergence of gradient flow for least-squares problems can be accelerated by incorporating information from the underlying physical or operator dynamics. Motivated by the observation that physical evolution equations often exhibit faster relaxation behavior, we introduce a constant preconditioner $A$ to modify the standard gradient flow. Specifically, we consider the preconditioned least-squares problem
\begin{equation}
    \min_{\theta \in \mathbb{R}^m} \frac{1}{2}\|A F(\theta)\|_2^2,
\end{equation}
which induces the following preconditioned gradient flow:
\begin{equation*}
    \frac{\mathrm{d}\theta}{\mathrm{d}t}
    = -\frac{1}{2}\nabla \|A F(\theta)\|_2^2
    = -J_F^{\top}(\theta)\, A^{\top} A\, F(\theta),
\end{equation*}
where $J_F(\theta)$ denotes the Jacobian of $F$ evaluated at $\theta$.

Let the singular value decomposition (SVD) of $J_F(\theta)$ be given by $
J_F(\theta) = U \Sigma V^{\top}. $
Choosing the preconditioner as
$
A = V \Sigma^{-1/2} U^{\top},
$
we obtain
\begin{equation}\label{equ:preconditioned gradient flow}
\begin{aligned}
    \frac{\mathrm{d}\theta}{\mathrm{d}t}
    &= -J_F^{\top}(\theta) A^{\top} A F(\theta) 
    = -(V \Sigma U^{\top})(U \Sigma^{-1/2} V^{\top})(V \Sigma^{-1/2} U^{\top}) F(\theta) \\
    &= -V U^{\top} F(\theta).
\end{aligned}
\end{equation}
The resulting operator $V U^{\top}$ is semi-orthogonal, and the induced dynamics can be interpreted as a flow that preserves the stability properties of gradient descent while effectively filtering the residual through the dominant singular directions of the Jacobian. In this sense, the preconditioning aligns the descent direction with the intrinsic operator structure of the problem, leading to accelerated convergence compared to the classical gradient flow.

To obtain a practical iterative algorithm, we discretize the preconditioned gradient flow~\eqref{equ:preconditioned gradient flow}. This yields the update rule
\begin{equation}\label{equ:update for theta at iteration t}
    \theta_{t+1} = \theta_t - \alpha \, V_t U_t^{\top} F(\theta_t),
\end{equation}
where $\alpha > 0$ denotes the step size and $J_F(\theta_t) = U_t \Sigma_t V_t^{\top}$
is the (economy-sized) singular value decomposition of the Jacobian evaluated at the current iterate $\theta_t$. The resulting scheme defines a descent direction that is adaptively aligned with the dominant local singular structure of the nonlinear operator $F$.

\subsection{Local convergence}
We establish the {local convergence} of both the {GD} and {SPGD} algorithms for (\ref{equ:objective function}) under the following assumptions.

\begin{assumption}\label{assump:regular}
$\quad$

\begin{itemize}
    \item The mapping $F$ admits a zero equilibrium $\theta^* \in \mathbb{R}^m$; that is $F(\theta^*) = 0$.
    \item There exists a neighborhood $U$ of $\theta^*$ such that for all $\theta \in U$, the Jacobian matrix $J_F(\theta)$ satisfies $\mathrm{rank}(J_F(\theta)) \equiv r$. Thus, by constant-rank level set theorem~\cite{Lee-2012}, we have that 
$
\mathcal{M} := \{\theta \in U : F(\theta)=0\}
$
is an embedded submanifold of $\mathbb{R}^m$ with codimension $r$. 
    \item The function $f(\theta)$ is $L$-smooth on $U$; specifically,
    \begin{equation*}
        \|\nabla f(\theta_1) - \nabla f(\theta_2)\|_2 \leq L \|\theta_1 - \theta_2\|_2, \qquad \forall\, \theta_1, \theta_2 \in U.
    \end{equation*}
\end{itemize}
\end{assumption}

\begin{theorem}[Local linear convergence of GD near a regular equilibrium]\label{thm:local linear convergence of gd}
Suppose that $F$ satisfies the conditions in Assumption~\ref{assump:regular}. 
Consider the GD iteration
\begin{equation}\label{equ:gd iteration in thm}
    \theta_{t+1} = \theta_t - \alpha\nabla f(\theta_t),
    \qquad\forall t\ge 0,
\end{equation}
where $\alpha>0$ denotes the step size. Then, for any sufficiently small $\alpha$, there exists a neighborhood
$U_{\alpha}\subset U$ of $\theta^*$ such that the gradient descent sequence
$\{\theta_t\}_{t\geq 0}$ generated by~\cref{equ:gd iteration in thm} with
initial point $\theta_0\in U_{\alpha}$ satisfies
\begin{equation}\label{equ:linear convergence of gd}
f(\theta_t)\leq(1-\alpha\mu)^tf(\theta_0),
\qquad \forall t\geq 0,
\end{equation}
where $\displaystyle\mu=\max_{\theta\in V}  \frac{ \|\nabla f(\theta)\|_2^2} {2 f(\theta)}$ as defined in~Lemma~2.
In particular, the sequence $\{f(\theta_t)\}_{t\geq 0}$ converges to zero at a linear rate.
\end{theorem}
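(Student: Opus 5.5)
The plan is to combine the descent lemma from $L$-smoothness with a local Polyak--Łojasiewicz (PL) inequality $\|\nabla f(\theta)\|_2^2\ge 2\mu f(\theta)$ on a neighborhood $V\subset U$ of $\theta^*$. The PL inequality is the content of the paper's Lemma~2, which presumably derives it from the constant-rank assumption. I take the constant $\mu$ to be the one in that lemma, i.e.\ a uniform lower bound on $\|\nabla f\|_2^2/(2f)$ over $V$ away from $\mathcal M$. The "$\max$" in the statement must effectively play this role: the quantity $\min_{\theta\in V}\sigma_r(J_F(\theta))^2$ bounded below.

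\textbf{Step 1 (local PL inequality).} I would justify it as follows. Write $\nabla f=J_F^\top F$. By the constant-rank theorem, near $\theta^*$ the image of $F$ lies in an $r$-dimensional manifold whose tangent space is $\operatorname{range}J_F(\theta)$. Taylor expansion then gives $F(\theta)=J_F(\theta)(\theta-\pi(\theta))+O(\|\theta-\pi(\theta)\|^2)$, where $\pi$ is the projection onto $\mathcal M$. Hence the component of $F$ orthogonal to $\operatorname{range}J_F(\theta)$ is of higher order than the component inside it. It follows that $\|J_F^\top F\|\ge \sigma_r(J_F)\|P_{\operatorname{range}J_F}F\|\ge c\,\sigma_r\|F\|$ on a small ball. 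Squaring gives PL with $\mu\approx c^2\min\sigma_r^2>0$, using continuity of the nonzero singular values under constant rank.

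\textbf{Step 2 (one-step contraction).} For $\theta_t\in V$ with the segment to $\theta_{t+1}$ in $U$, $L$-smoothness gives
\begin{equation*}
f(\theta_{t+1})\le f(\theta_t)-\alpha\Bigl(1-\tfrac{L\alpha}{2}\Bigr)\|\nabla f(\theta_t)\|_2^2 .
\end{equation*}
With $\alpha\le 1/L$ this is at most $f(\theta_t)-\tfrac{\alpha}{2}\|\nabla f\|^2$. Then PL yields $f(\theta_{t+1})\le(1-\alpha\mu)f(\theta_t)$. To match the constant in \eqref{equ:linear convergence of gd} exactly, I would either absorb the factor $\tfrac12$ into the definition of $\mu$ or take $\alpha$ small enough that $1-L\alpha/2$ is close to $1$ and state $\mu$ accordingly. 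This is a bookkeeping issue.

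\textbf{Step 3 (iterates stay in $V$; main obstacle).} This step is the main obstacle: we must show the iterates never leave $V$, so that Steps 1--2 apply for all $t$. The standard trajectory-length argument handles it. Since $\|\theta_{t+1}-\theta_t\|=\alpha\|\nabla f(\theta_t)\|$ and $\|\nabla f\|^2\le \tfrac{2}{\alpha}(f(\theta_t)-f(\theta_{t+1}))$, we can bound $\alpha\|\nabla f(\theta_t)\|\le \sqrt{2\alpha f(\theta_t)}\le \sqrt{2\alpha f(\theta_0)}\,(1-\alpha\mu)^{t/2}$ by induction. Summing the geometric series gives total path length at most $C\sqrt{f(\theta_0)}/\mu$, uniformly in small $\alpha$. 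Since $f(\theta_0)\le \tfrac{L}{2}\|\theta_0-\theta^*\|^2$ (because $\nabla f(\theta^*)=0$ and $f(\theta^*)=0$), choosing $U_\alpha$ as a ball $B(\theta^*,\delta)$ with $\delta+C'\delta/\mu<\operatorname{radius}(V)$ keeps every iterate in $V$. A simultaneous induction on $t$ then closes the argument and gives $f(\theta_t)\le(1-\alpha\mu)^tf(\theta_0)\to0$ linearly.
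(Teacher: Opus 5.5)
Your Steps~2 and~3 are the paper's proof. The paper uses the descent inequality $f(\theta_{t+1})\le f(\theta_t)-\frac{\alpha}{2}\|\nabla f(\theta_t)\|_2^2$ for $\alpha\le 1/L$. It uses the PL inequality of Lemma~2, with $\mu$ read, as you correctly do, as a uniform \emph{lower} bound for $\|\nabla f\|_2^2/(2f)$ on $V$. It then runs an induction that keeps the iterates in $V$ by summing the step bounds $\sqrt{2\alpha f(\theta_0)}\,(1-\alpha\mu)^{k/2}$. Replacing the paper's set $\{\theta\in V:\mathrm{dist}(\theta,\partial V)>\frac{2}{\mu}\sqrt{2f(\theta)/\alpha}\}$ by a ball together with $f(\theta_0)\le\frac{L}{2}\|\theta_0-\theta^*\|_2^2$ is cosmetic.

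The genuine difference is Step~1. The paper gets PL from Lemma~1, which gives uniform positive definiteness of $\nabla^2 f$ on the normal spaces $N_{\pi(\theta)}\mathcal M$. It then integrates $f$ along the normal segment $[\pi(\theta),\theta]$, a Morse--Bott type argument that never uses the least-squares structure. You instead use $\nabla f=J_F^\top F$ and show that $F(\theta)$ lies in $\operatorname{range}J_F(\theta)$ up to $O(\|F(\theta)\|_2^2)$. This is exactly the mechanism of the paper's Lemmas~3--4, which the paper only deploys for SPGD. Your route therefore unifies the two analyses and ties $\mu$ directly to $\sigma_r^2$. To make ``higher order than the component inside it'' rigorous you need $\|J_F(\theta)n(\theta)\|_2\gtrsim\sigma_r\|n(\theta)\|_2$. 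This holds because $n(\theta)\in N_{\pi(\theta)}\mathcal M=\operatorname{range}J_F^\top(\pi(\theta))$ and $J_F$ is Lipschitz; that is Lemma~3.

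One claim in Step~3 is false: the geometric-series estimate is not uniform in $\alpha$. Since $1-\sqrt{1-x}\le x$, the sum $\sqrt{2\alpha f(\theta_0)}/(1-\sqrt{1-\alpha\mu})$ is at least $\frac{1}{\mu}\sqrt{2f(\theta_0)/\alpha}$. The best it yields is a path length of at most $\frac{2}{\mu}\sqrt{L/\alpha}\,\|\theta_0-\theta^*\|_2$, which blows up as $\alpha\to0$. This does not break the proof, because the statement allows $U_\alpha$ to depend on $\alpha$. Choose $\delta$ with $\delta\,(1+\frac{2}{\mu}\sqrt{L/\alpha})$ below the radius of a ball about $\theta^*$ inside $V$; this $\alpha^{-1/2}$ is exactly the one in the paper's $U_\alpha$.

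If you want an $\alpha$-independent neighborhood, telescope $\sqrt f$ instead. Concavity of the square root, the descent inequality and PL give $\alpha\|\nabla f(\theta_t)\|_2\le 2\sqrt{2/\mu}\,\bigl(\sqrt{f(\theta_t)}-\sqrt{f(\theta_{t+1})}\bigr)$. The total length is then at most $2\sqrt{2f(\theta_0)/\mu}$, which is slightly stronger than what the paper proves.

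Finally, the ``bookkeeping issue'' in Step~2 does not exist: $\frac{\alpha}{2}\|\nabla f\|_2^2\ge\frac{\alpha}{2}\cdot 2\mu f=\alpha\mu f$ gives the factor $1-\alpha\mu$ exactly.
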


\begin{proof}
Let $V$ be the neighborhood of $\theta^*$ defined in~Lemma~1. For a given step size $0<\alpha \leq 1/L$, we define
\begin{equation}\label{equ:definition of Ualpha}
    U_{\alpha} = \left\{\theta \in V : \mathrm{dist}(\theta, \partial V) > \frac{2}{\mu} \sqrt{\frac{2 f(\theta)}{\alpha}} \right\}.
\end{equation}
Since the functions are continuous, $U_{\alpha} \subset V$ forms an open neighborhood of $\theta^*$.

We proceed by induction to show that any gradient descent sequence
$\{\theta_t\}_{t \ge 0}$ initialized at $\theta_0 \in U_{\alpha}$ remains in $V$ for all $t \ge 0$.
This ensures that all the properties established in
Lemmas~1 and~2
hold along the entire trajectory. Moreover, the corresponding sequence $\{f(\theta_t)\}_{t \ge 0}$
converges to zero at a linear rate.

\medskip
\noindent
\textbf{Base case.} Since $\theta_0\in U_{\alpha}\subset V$, the claim of~\cref{thm:local linear convergence of gd} holds trivially for $t=0$. 

\medskip
\noindent
\textbf{Induction hypothesis.} Assume that, for some $t \ge 1$, the iterate $\theta_t \in V$ satisfies
\begin{align}
    f(\theta_t) &\le (1 - \alpha \mu)^t f(\theta_0), \label{equ:function induction for thetat} \\
    \|\theta_t - \theta_0\|_2
    &\le \sum_{k=0}^{t-1} \sqrt{2\alpha f(\theta_0)} (1 - \alpha \mu)^{k/2}
    < \frac{2}{\mu} \sqrt{\frac{2 f(\theta_t)}{\alpha}}.
    \label{equ:distance_induction_gd}
\end{align}
It remains to prove that these properties hold for $\theta_{t+1}$ as well.

\medskip
\noindent
\textbf{Induction step.} By the descent lemma, for any $\theta\in U$, we have
\begin{equation*}
    f(\theta-\alpha\nabla f(\theta))\leq f(\theta)-\alpha\|\nabla f(\theta)\|_2^2+\frac{L\alpha^2}{2}\|\nabla f(\theta)\|_2^2.
\end{equation*}
For any sufficiently small step size $0<\alpha\leq1/L$, this inequality simplifies to
$    f(\theta-\alpha\nabla f(\theta))\leq f(\theta)-\frac{\alpha}{2}\|\nabla f(\theta)\|_2^2.$
By setting $\theta=\theta_t$, we have 
\begin{equation}\label{equ:descent at thetat}
    f(\theta_{t+1})\leq f(\theta_t)-\frac{\alpha}{2}\|\nabla f(\theta_t)\|_2^2.
\end{equation}
Moreover, from the update rule in~\cref{equ:gd iteration in thm}, we have
\begin{equation}\label{equ:distance of adjacent step}
    \|\theta_{t+1}-\theta_t\|_2=\alpha\|\nabla f(\theta_t)\|\leq\sqrt{2\alpha f(\theta_t)}.
\end{equation}
Combining~\cref{equ:function induction for thetat} and~\cref{equ:distance of adjacent step} gives
$    \|\theta_{t+1}-\theta_t\|_2\leq\sqrt{2\alpha f(\theta_0)}\left(1-\alpha\mu\right)^{\frac{t}{2}}.
$
Since $\theta_t\in V$, by applying~Lemma~2, we have
\begin{equation}\label{equ:PL inequality at thetat}
    \|\nabla f(\theta_t)\|_2^2\geq 2\mu f(\theta_t).
\end{equation}
Substituting~\cref{equ:descent at thetat} into~\cref{equ:PL inequality at thetat} gives
$    f(\theta_{t+1})\leq (1-\alpha\mu)f(\theta_t).$
Using the induction hypothesis~\cref{equ:function induction for thetat} again, we obtain
$    f(\theta_{t+1})\leq (1-\alpha\mu)^{t+1}f(\theta_0).
$
Finally, summing the stepwise distances yields 
\begin{equation}
\begin{split}
    \|\theta_{t+1}-\theta_0\|_2&\leq\sum_{k=0}^t\|\theta_{k+1}-\theta_k\|
    \leq\sum_{k=0}^t \sqrt{2\alpha f(\theta_0)}\left(1-\alpha\mu\right)^{\frac{k}{2}}
    <\frac{\sqrt{2\alpha f(\theta_0)}}{1-\sqrt{1-\alpha\mu}}
    \\
    &\leq\frac{2}{\mu}\sqrt{\frac{2f(\theta_0)}{\alpha}}.
  \end{split}
\end{equation}
By the construction~\cref{equ:definition of Ualpha} of $U_{\alpha},$ this implies $ \|\theta_{t+1}-\theta_0\|_2<\mathrm{dist}(\theta_0,\partial V)$, and therefore $\theta_{t+1}\in V$. Hence, the induction step holds.
By induction, the GD sequence $\{\theta_t\}_{t\geq 0}$ generated by~\cref{equ:gd iteration in thm} with $\theta_0\in U_{\alpha}$ remains in $V$ for all $t\geq 0$, and satisfies
$    f(\theta_{t})\leq(1-\alpha\mu)^t f(\theta_0),$
which completes our proof.
\end{proof}

\begin{remark}\label{rem: convergence rate for gd}
Locally in a neighborhood of $\theta^*$, the constants $\mu$ and $L$ admit a natural spectral interpretation. 
The Lipschitz constant $L$ is of the same order as $\sigma_{\max}^2(J_F(\theta^*))$, while the constant $\mu$ appearing in~Lemma~2 is of the same order as $\sigma_{\min}^2(J_F(\theta^*))$. 
Choosing the step size $\alpha = 1/L$, the corresponding linear convergence factor satisfies
\begin{equation*}
    1 - \frac{\mu}{L}
    \;\approx\;
    1 - \frac{\sigma_{\min}^2(J_F(\theta^*))}{\sigma_{\max}^2(J_F(\theta^*))},
\end{equation*}
which recovers the classical local convergence behavior of GD and shows that the rate is governed by the square of the condition number of $J_F(\theta^*)$.
\end{remark}

To establish the local convergence of the SPGD method, we first introduce two key lemmas  in {\bf Appendix}. Specifically,~Lemma~3 provides a lower bound control of $\|F(\theta)\|_2$ via the distance $n(\theta)=\theta-\pi(\theta)$ from the $\theta$ to the manifold $\mathcal{M}$, with the bound quantified by the smallest singular value of the Jacobian.~Lemma~4 shows that the projection of $F(\theta)$ onto the orthogonal complement of the Jacobianâ€™s column space is a higher-order small term of $\|F(\theta)\|$, which can be neglected in the convergence analysis.

\begin{theorem}[Local linear convergence of the SPGD method near a regular equilibrium]\label{thm:local linear convergence of pgd}
Suppose that $F$ satisfies the conditions in Assumption~\ref{assump:regular}. Let $\pi \colon W \to \mathcal{M}$ denote the projection map defined in~Lemma~4, and let $\sigma_{\min}$ and $\sigma_{\max}$ be positive constants such that the singular values of $J_F(\theta)$ satisfy
$\sigma_{\min} \le \sigma(J_F(\theta)) \le \sigma_{\max}, \forall \theta \in W.$
Then, for any sufficiently small step size $\alpha \le 1/\sigma_{\max}$, there exists a neighborhood $U_{\alpha} \subset W$ of $\theta^*$ such that, for any initial point $\theta_0 \in U_{\alpha}$, the sequence $\{\theta_t\}_{t \ge 0}$ generated by~\eqref{equ:update for theta at iteration t} satisfies
\begin{equation}\label{equ:linear convergence of pgd}
    f(\theta_t) \le \left(1 - \frac{\alpha \sigma_{\min}}{2}\right)^t f(\theta_0),
    \qquad \forall t \ge 0.
\end{equation}
In particular, the objective values $\{f(\theta_t)\}_{t \ge 0}$ converge to zero at a linear rate.
\end{theorem}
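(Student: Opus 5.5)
We follow the template of the proof of \cref{thm:local linear convergence of gd}: a one-step descent inequality valid on $W$, combined with an induction showing that the iterates never leave $W$. Write $P_t = U_tU_t^{\top}$ for the orthogonal projector onto $\mathrm{range}(J_F(\theta_t))$, and $d_t = V_tU_t^{\top}F(\theta_t)$ for the search direction. The identity that drives everything is
\[
\nabla f(\theta_t)^{\top} d_t = F(\theta_t)^{\top} U_t\Sigma_t V_t^{\top}V_tU_t^{\top}F(\theta_t) = \|\Sigma_t^{1/2}U_t^{\top}F(\theta_t)\|_2^2 \ge \sigma_{\min}\|P_tF(\theta_t)\|_2^2 .
\]
Because $V_t$ and $U_t$ have orthonormal columns, we also have $\|d_t\|_2 = \|P_tF(\theta_t)\|_2 \le \|F(\theta_t)\|_2$.

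\textbf{Step 1 (one-step decrease).} Expand $f$ along the step. The cleanest way is through $F$:
\[
F(\theta_t-\alpha d_t) = F(\theta_t) - \alpha J_F(\theta_t)d_t + R_t, \qquad \|R_t\|_2 \le \tfrac{K}{2}\alpha^2\|d_t\|_2^2 ,
\]
where $K$ is a local Lipschitz constant of $J_F$ on $W$ (shrink $W$ if needed). Since $J_F(\theta_t)d_t = U_t\Sigma_tU_t^{\top}F(\theta_t)$, we get
\[
\|F(\theta_t) - \alpha J_F d_t\|_2^2 = \|(I-P_t)F\|_2^2 + \|(I-\alpha\Sigma_t)U_t^{\top}F\|_2^2 \le \|(I-P_t)F\|_2^2 + (1-\alpha\sigma_{\min})^2\|P_tF\|_2^2 ,
\]
where the last bound uses $\alpha\le 1/\sigma_{\max}$, so that $0 \le 1-\alpha\sigma_i \le 1-\alpha\sigma_{\min}$. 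This is where the step-size condition enters.

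\textbf{Step 2 (controlling the normal component).} The main obstacle is the term $\|(I-P_t)F(\theta_t)\|_2$: it is not decreased by the step, and no PL-type inequality is available for it. Here we invoke Lemma~4, which says $\|(I-P_t)F(\theta_t)\|_2 = o(\|F(\theta_t)\|_2)$; concretely, $\le c\|n(\theta_t)\|_2^2$. By Lemma~3, $\|F(\theta_t)\|_2 \ge \sigma_{\min}\|n(\theta_t)\|_2$, so the normal component is $O(\|F\|_2^2)$. After possibly shrinking $W$, we may therefore assume $\|(I-P_t)F\|_2^2 \le \varepsilon\|F\|_2^2$ with $\varepsilon$ as small as we like, and also $\|R_t\|_2 \le \varepsilon\alpha\|F\|_2$.

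Putting Steps 1 and 2 together gives
\[
\|F(\theta_{t+1})\|_2^2 \le \bigl((1-\alpha\sigma_{\min})^2 + C\varepsilon\bigr)\|F(\theta_t)\|_2^2 .
\]
Choosing $\varepsilon$ so that $C\varepsilon \le \alpha\sigma_{\min}/2$, and using $(1-x)^2 \le 1-x$ for $x\in[0,1]$, this yields
\[
f(\theta_{t+1}) \le \Bigl(1-\tfrac{\alpha\sigma_{\min}}{2}\Bigr) f(\theta_t).
\]
Choosing $\varepsilon$ this way requires the neighborhood to depend on $\alpha$, which matches the statement. The delicate bookkeeping is to make $\varepsilon$ proportional to $\alpha\sigma_{\min}$ while the remainder terms scale like $\alpha^2$ or like $\|F\|$; both are harmless once $\|F(\theta_0)\|$ is small enough.

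\textbf{Step 3 (staying in $W$).} We run an induction exactly as in \cref{thm:local linear convergence of gd}. The step length satisfies
\[
\|\theta_{k+1}-\theta_k\|_2 \le \alpha\|F(\theta_k)\|_2 \le \alpha\sqrt{2f(\theta_0)}\,(1-\alpha\sigma_{\min}/2)^{k/2},
\]
and summing the geometric series gives
\[
\|\theta_{t+1}-\theta_0\|_2 \le \frac{4}{\sigma_{\min}}\sqrt{2f(\theta_0)} .
\]
Accordingly, we define
\[
U_\alpha = \Bigl\{\theta\in W' : \mathrm{dist}(\theta,\partial W') > \frac{4}{\sigma_{\min}}\sqrt{2f(\theta)}\Bigr\},
\]
where $W'\subset W$ is the shrunken neighborhood from Step 2. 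This set is open and contains $\theta^*$, since $f(\theta^*)=0$. The induction then keeps every iterate in $W'$, so the one-step contraction applies at each step and gives \eqref{equ:linear convergence of pgd}.
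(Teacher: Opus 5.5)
Your proof is correct and follows essentially the paper's route: the expansion $F(\theta_{t+1}) = (I-\alpha U_t\Sigma_tU_t^{\top})F(\theta_t)$ plus a quadratic remainder, the split into $\mathrm{range}(J_F(\theta_t))$ and its orthogonal complement with Lemma~4 controlling the latter, and the distance-to-boundary induction. The only difference is cosmetic: you combine the two components via Pythagoras and contract $f$ directly, hence the constant $4/\sigma_{\min}$ in $U_\alpha$, whereas the paper uses the triangle inequality to contract $\|F\|_2$ itself by $1-\alpha\sigma_{\min}/2$, which is slightly stronger than the stated bound on $f$.
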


\begin{proof}
According to the construction of $W$ in~Lemma~4, by shrinking $W$ if necessary, we assume that $W$ is precompact, and for any $\theta\in W$,
\begin{equation}\label{equ:shrinking W}
    \left(C_{\perp} + \frac{L_{J_F}\alpha^2}{2}\right) \|F(\theta)\|_2 \le \frac{\alpha\sigma_{\min}}{2},
\end{equation}
where $L_{J_F}$ is the Lipschitz constant of $J_F$ on $W$, that is,
\begin{equation}\label{equ:Lipschitz constant for jacobian}
    \|J_F(\theta_1)-J_F(\theta_2)\|_2\leq L_{J_F}\|\theta_1-\theta_2\|_2.
\end{equation}
Notice that $L_{J_F}$ is finite due to the precompactness of $W$ and the $C^2$ smoothness of $F$.
We define the neighborhood $U_{\alpha}\subset W$ of $\theta^*$ as:
\begin{equation}\label{equ:definition of Ualpha_pgd}
    U_{\alpha} = \left\{\theta \in W : \mathrm{dist}(\theta, \partial W) > \frac{2}{\sigma_{\min}} \|F(\theta)\|_2 \right\},
\end{equation}
where $C_{\perp}$ is defined in~Lemma~4.
Since the functions appearing in~\cref{equ:definition of Ualpha_pgd} are continuous, $U_{\alpha}\subset W$ is an open neighborhood of $\theta^*$.

We proceed by induction to show that any sequence $\{\theta_t\}_{t \ge 0}$ generated by the update rule $\theta_{t+1} = \theta_t - \alpha V_t U_t^\top F(\theta_t)$ initialized at $\theta_0 \in U_{\alpha}$ remains in $W$ for all $t \ge 0$, and the residual norm converges linearly.

\medskip
\noindent
\textbf{Base case.} Since $\theta_0 \in U_{\alpha} \subset W$, the claim holds trivially for $t=0$.

\medskip
\noindent
\textbf{Induction hypothesis.} Assume that, for some $t \ge 0$, the iterate $\theta_t \in W$ satisfies
\begin{align}
    \|F(\theta_t)\|_2 &\le \left(1 - \frac{\alpha\sigma_{\min}}{2}\right)^t \|F(\theta_0)\|_2, \label{equ:residual induction for thetat} \\
    \|\theta_t - \theta_0\|_2 &\le \sum_{k=0}^{t-1} \alpha \left(1 - \frac{\alpha \sigma_{\min}}{2}\right)^k \|F(\theta_0)\|_2
    < \frac{2}{\sigma_{\min}} \|F(\theta_0)\|_2. \label{equ:distance_induction_pgd}
\end{align}
It remains to prove that these properties hold for $\theta_{t+1}$ as well.

\medskip
\noindent
\textbf{Induction step.} 
We apply the fundamental theorem of calculus to $F$ and obtain
\begin{equation}
\begin{split} F(\theta_{t+1})&=F(\theta_t)+\int_0^1 J_F(\theta_t+s(\theta_{t+1}-\theta_t))(\theta_{t+1}-\theta_t)\mathrm{d}s
    \\
    & =F(\theta_t)+\int_0^1 J_F(\theta_t)(\theta_{t+1}-\theta_t)\mathrm{d}s+\int_0^1 [J_F(\theta_t+s(\theta_{t+1}-\theta_t))-J_F(\theta_t)](\theta_{t+1}-\theta_t)\mathrm{d}s        \\
    &=F(\theta_t)+J_F(\theta_t)(\theta_{t+1}-\theta_t)+Q_t.
\end{split}  \label{equ:relationship of F at t and t+1}
\end{equation}
Due to $\theta_{t+1}-\theta_t=-\alpha V_tU_t^{\top}F(\theta_t)$ and $J_F(\theta_t) = U_t \Sigma_t V_t^\top$, we obtain
\begin{equation}\label{equ:iteration scheme for F}
    F(\theta_{t+1})=M_tF(\theta_t)+Q_t \hbox{~and~}M_t=I-\alpha U_t\Sigma_t U_t^{\top}.
\end{equation}
Using~\cref{equ:Lipschitz constant for jacobian} yields
\begin{equation}\label{equ:estimation for Qt}
\begin{split}
    \|Q_t\|_2&\leq \int_0^1\|J_F(\theta_t+s(\theta_{t+1}-\theta_t))-J_F(\theta_t)\|_2\|(\theta_{t+1}-\theta_t)\|_2\mathrm{d}s
    \\
    &\leq \int_0^1 sL_{J_F}\|\theta_{t+1}-\theta_t\|_2^2\mathrm{d}s
    =\frac{L_{J_F}}{2}\|\theta_{t+1}-\theta_t\|_2^2\leq \frac{L_{J_F}\alpha^2}{2}\|F(\theta_t)\|_2^2.
\end{split}
\end{equation}

Now, let $\mathcal{R}_t = \mathrm{range}(J_F(\theta_t))$ denote the column space of the Jacobian matrix $J_F(\theta_t)$, and let $\mathcal{R}_t^\perp$ denote its orthogonal complement. We define the orthogonal projection operators $P_{\mathcal{R}_t}$ and $P_{\mathcal{R}_t^\perp}$ respectively and have the orthogonal decomposition of the residual $F(\theta_t)$: 
\begin{equation}\label{equ:orthogonal projection of residual}
    F(\theta_t) = P_{\mathcal{R}_t} F(\theta_t) + P_{\mathcal{R}_t^\perp} F(\theta_t).
\end{equation}
Therefore, combining~\cref{equ:iteration scheme for F} and~\cref{equ:orthogonal projection of residual}, we obtain
\begin{equation}\label{equ:final relationship of F at t and t+1}
    F(\theta_{t+1})= M_tP_{\mathcal{R}_t} F(\theta_t) + M_tP_{\mathcal{R}_t^\perp} F(\theta_t)+ Q_t.
\end{equation}

The operator $M_t$ acts differently on these two orthogonal subspaces $\mathcal{R}_t$ and $\mathcal{R}_t^{\perp}$.
\begin{itemize}
\item \textbf{On the Range Space $\mathcal{R}_t$.}   Since the columns of $U_t$ forms an orthonormal basis of $\mathcal{R}_t$, the eigenvalues of $M_t$ restricted to this subspace are $\{1 - \alpha \sigma_i(J_F(\theta_t))\}_{i=1}^r$. By restricting the step size $0<\alpha\leq1/\sigma_{\max}$, we have
\begin{equation*}
    \max_{i} |1 - \alpha \sigma_i(J_F(\theta_t))| = 1 - \alpha \min_{i} \sigma_i(J_F(\theta_t)) \le 1 - \alpha\sigma_{\min}.
\end{equation*}
Consequently, the projection onto the range space satisfies the contraction property:
\begin{equation}\label{equ:contraction_range}
    \|M_t P_{\mathcal{R}_t} F(\theta_t)\|_2 \le (1 - \alpha\sigma_{\min}) \|P_{\mathcal{R}_t} F(\theta_t)\|_2 \le (1 - \alpha\sigma_{\min}) \|F(\theta_t)\|_2.
\end{equation}

    \item \textbf{On the Orthogonal Complement $\mathcal{R}_t^\perp$.} 
    By definition, any vector $v \in \mathcal{R}_t^\perp$ is orthogonal to the columns of $U_t$, which implies $U_t^{\top} v = 0$. Applying the operator $M_t$ to the projection $P_{\mathcal{R}_t^\perp} F(\theta_t)$ yields
    \begin{equation*}
        M_t P_{\mathcal{R}_t^\perp} F(\theta_t) = (I - \alpha U_t \Sigma_t U_t^{\top}) P_{\mathcal{R}_t^\perp} F(\theta_t) = P_{\mathcal{R}_t^\perp} F(\theta_t) .
    \end{equation*}
    Thus, $M_t$ acts as the identity operator on this subspace, preserving the norm
    \begin{equation}\label{equ:identity_null}
        \|M_t P_{\mathcal{R}_t^\perp} F(\theta_t)\|_2 = \|P_{\mathcal{R}_t^\perp} F(\theta_t)\|_2.
    \end{equation}
\end{itemize}
Combining~Lemma~4,~\cref{equ:estimation for Qt} and~\cref{equ:final relationship of F at t and t+1}, we have
\begin{equation}\label{equ:dynamics decomposition}
\begin{split}
    \|F(\theta_{t+1})\|_2 &\le \|M_t P_{\mathcal{R}_t} F(\theta_t)\|_2 + \|M_t P_{\mathcal{R}_t^\perp} F(\theta_t)\|_2 + \|Q_t\|_2 \\
    &\le (1 - \alpha\sigma_{\min}) \|F(\theta_t)\|_2 + C_{\perp} \|F(\theta_t)\|_2^2 + \frac{L_{J_F}\alpha^2}{2} \|F(\theta_t)\|_2^2.
\end{split}
\end{equation}
 By the induction hypothesis, $\theta_t \in W$. Hence, combining~\cref{equ:shrinking W} and~\cref{equ:dynamics decomposition} yields:
\begin{equation*}
    \|F(\theta_{t+1})\|_2 \le \left(1 - \alpha\sigma_{\min} + \frac{\alpha \sigma_{\min}}{2}\right) \|F(\theta_t)\|_2 = \left(1 - \frac{\alpha\sigma_{\min}}{2}\right) \|F(\theta_t)\|_2.
\end{equation*}
Using the induction hypothesis~\cref{equ:residual induction for thetat} again, we obtain
\begin{equation}\label{equ:residual induction for thetat+1}
    \|F(\theta_{t+1})\|_2 \le \left(1 - \frac{\alpha\sigma_{\min}}{2}\right)^{t+1} \|F(\theta_0)\|_2.
\end{equation}
Finally, summing the stepwise distances yields
\begin{equation}
\begin{split}
    \|\theta_{t+1} - \theta_0\|_2 &\le \sum_{k=0}^{t} \|\theta_{k+1} - \theta_k\|_2 
    \le \sum_{k=0}^{t} \alpha \|F(\theta_k)\|_2 \le \alpha \|F(\theta_0)\|_2 \sum_{k=0}^{t} \left(1 - \frac{\alpha\sigma_{\min}}{2}\right)^k \\
    &< \alpha \|F(\theta_0)\|_2 \cdot \frac{2}{\alpha\sigma_{\min}} 
    = \frac{2}{\sigma_{\min}} \|F(\theta_0)\|_2.
\end{split}
\end{equation}
By the construction~\cref{equ:definition of Ualpha_pgd} of $U_{\alpha},$ this implies $ \|\theta_{t+1}-\theta_0\|_2<\mathrm{dist}(\theta_0,\partial W)$, and therefore $\theta_{t+1}\in W$. Hence, the induction step holds.

\medskip
\noindent
 By induction, the sequence $\{\theta_t\}_{t \ge 0}$ generated by the SVD-preconditioned gradient descent with $\theta_0 \in U_{\alpha}$ remains in $W$ for all $t \ge 0$, and satisfies
\begin{equation*}
    \|F(\theta_t)\|_2 \le \left(1 - \frac{\alpha\sigma_{\min}}{2}\right)^t \|F(\theta_0)\|_2.
\end{equation*}
\end{proof}
\begin{remark}
With the step size chosen as $\alpha = 1/\sigma_{\max}$, the linear convergence factor of the proposed SPGD method becomes $1 - \sigma_{\min}/(2\sigma_{\max})$. By contrast, the local linear convergence factor of classical GD, established in~\cref{thm:local linear convergence of gd}, is given by $1 - \sigma_{\min}^{2}/\sigma_{\max}^{2}$. 
In the context of neural network optimization, it has been observed that the Jacobian $J_F(\theta)$ often becomes increasingly ill-conditioned as the network depth grows, primarily due to the monotonic rank-diminishing effect induced by layer composition~\cite{feng-2022-rank_diminishinig}. In such ill-conditioned regimes, the factor $1 - \sigma_{\min}^{2}/\sigma_{\max}^{2}$ associated with classical gradient descent rapidly approaches unity, leading to slow convergence. By comparison, the convergence factor $1 - \sigma_{\min}/(2\sigma_{\max})$ for the SPGD method deteriorates more slowly, resulting in a markedly improved local convergence behavior.

\end{remark}


\section{Adam framework with SPGD method}
\label{sec:adam framework with spgd}

We now extend the SPGD method to adaptive optimization frameworks. Specifically, we incorporate the SVD-based preconditioner into the Adam algorithm to leverage both curvature information from the Jacobian and adaptive moment estimates from Adam. This hybrid approach accelerates convergence while maintaining numerical stability, particularly in ill-conditioned or rank-deficient settings common in nonlinear least-squares problems, making it suitable for large-scale optimization tasks.

\subsection{Modified Adam algorithm}
The descent direction in~\cref{equ:update for theta at iteration t} can be equivalently expressed in a preconditioned gradient form as
\begin{equation*}
    \begin{split}
        V_tU_t^{\top}F(\theta_t)&=\bigl[(J_F^{\top}(\theta_t)J_F(\theta_t))^{\dagger}\bigr]^{\frac{1}{2}} J_{F}^{\top}(\theta_t)F(\theta_t)
=\bigl[(J_F^{\top}(\theta_t)J_F(\theta_t))^{\dagger}\bigr]^{\frac{1}{2}}\nabla f(\theta_t)
        \\
        &= B_t\nabla f(\theta_t),\hbox{~where~}B_t=\bigl[(J_F^{\top}(\theta_t)J_F(\theta_t))^{\dagger}\bigr]^{1/2}.
    \end{split}
\end{equation*} 
To further enhance convergence, we incorporate this preconditioned descent direction into the Adam optimization framework, resulting in the SPGD-Adam algorithm, which is summarized in~\cref{alg:svd-adam_algorithm}.

\begin{remark}[Extension to cross-entropy loss]\label{rem:cross_entropy_extension}
For a batch of $B$ samples and $K$ classes, the cross-entropy loss is defined as
\begin{equation*}
    \mathcal{L}_{\mathrm{CE}}(\theta)
    = -\frac{1}{B}\sum_{i=1}^{B}\sum_{j=1}^{K} y_{ij}\log(p_{ij}),
\end{equation*}
where $p_{ij}$ denotes the softmax probability corresponding to class $j$ for sample $i$, and
$F(\theta)\in\mathbb{R}^{B\times K}$ denotes the network output.
In this setting, the preconditioner can be generalized to
$    B_t = \bigl(J_F^{\top} C J_F + \delta I\bigr)^{-1/2},
$
where $\delta>0$ is a damping parameter and
$C\in\mathbb{R}^{BK\times BK}$ is a block-diagonal matrix whose $i$th block
$C_i\in\mathbb{R}^{K\times K}$ is given by
$    C_i = \mathrm{diag}(p_i) - p_i p_i^{\top}.
$
Each matrix $C_i$ is the Fisher information matrix of the categorical (softmax)
distribution associated with sample $i$ and is symmetric positive semidefinite \cite{chang2026fisher}.

\end{remark}

\begin{remark}[Efficient computation for large-scale networks]\label{rem:lanczos_approximation}
The direct computation of the preconditioner $B_t$ is computationally infeasible for large-scale networks, as it requires forming and factorizing the matrix
$J_F^{\top}J_F \in \mathbb{R}^{m\times m}$ (or $J_F^{\top} C J_F + \delta I$ in the cross-entropy setting), where $m$â€”the number of trainable parametersâ€”can range from tens of thousands to millions.
To overcome this bottleneck, we employ the Lanczos iteration method~\cite{golub-2013-matrix_computations} to efficiently approximate the action of $B_t$ on a vector without explicitly forming the matrix. Given a gradient vector $g$ and a prescribed number of Lanczos steps $k$ (with $k \ll m$), the method constructs a $k$-dimensional Krylov subspace
$\mathcal{K}_k(g) = \mathrm{span}\{g,\, Ag,\, A^2 g,\, \ldots,\, A^{k-1} g\},
A = J_F^{\top}J_F + \mu I,
$
(or $A = J_F^{\top} C J_F + \delta I$ in the cross-entropy case), and reduces the evaluation of the matrix function $A^{-1/2}g$ to a small tridiagonal problem.
Specifically, the Lanczos approximation takes the form
$    B_t g \;\approx\; \|g\|\, Q_k (T_k + \mu I)^{-1/2} e_1,
$
where $Q_k \in \mathbb{R}^{m\times k}$ contains the orthonormal Lanczos basis vectors,
$T_k \in \mathbb{R}^{k\times k}$ is the associated symmetric tridiagonal matrix,
and $e_1 = [1,0,\ldots,0]^{\top} \in \mathbb{R}^k$.
The matrix function $(T_k + \mu I)^{-1/2}$ is computed via eigendecomposition of the small $k\times k$ matrix, which is computationally inexpensive.

A key advantage of this approach is that Lanczos iteration only requires matrixâ€“vector products of the form $Av$, which can be computed efficiently using automatic differentiation (via Jacobian--vector and vector--Jacobian products), without explicitly forming $J_F$ or $J_F^{\top}J_F$.
As a result, the per-iteration computational cost scales as $O(k\,\mathrm{cost}(Av))$, making the method practical for networks with hundreds of thousands or even millions of parameters.
\end{remark}

\begin{algorithm}
\caption{SPGD method in Adam Framework}\label{alg:svd-adam_algorithm}
\begin{algorithmic}[1]
\setlength{\itemsep}{0.15ex}
    \Require Initial point $\theta_1$, hyperparameters $\beta_1,\beta_2$, step size $\alpha$
    \State $m_0\gets 0,\; v_0\gets 0$
    \For{$t=1,2,\ldots,T$}
        \State $J_t =  J_F(\theta_t)$,\quad
 $B_t = \bigl[(J_t^{\top}J_t)^{\dagger}\bigr]^{1/2}$, $g_t = \nabla f(\theta_t),\quad\lambda_t=B_tg_t$
        \State $m_t = \beta_1 m_{t-1} + (1-\beta_1)\lambda_t$, $v_t = \beta_2 v_{t-1} + (1-\beta_2)\lambda_t^{\odot2}$,\quad $ D_t = \operatorname{diag}(v_t)$
        \State $\theta_{t+1} = \theta_t - \alpha D_t^{-1/2} m_t$
    \EndFor
\end{algorithmic}
\end{algorithm}

\subsection{Global convergence}
In this section, we present the proof of global convergence for a modified version of our proposed~\cref{alg:svd-adam_algorithm}. The necessary assumptions and several auxiliary lemmas are provided in~\cref{subsubsec:assump} and Appendix~B respectively.

The original Adam optimizer proposed by Kingma and Ba in~\cite{kingma-2017-adam} is known to suffer from non-convergence even in the setting of online convex optimization~\cite{reddi-2018-AMSGrad}. To overcome this issue caused by the oscillatory term, Reddi et al.~\cite{reddi-2018-AMSGrad} introduced the AMSGrad variant, which eliminates the oscillatory behavior that hinders convergence. Following this idea, we adapt the AMSGrad framework~\cite{reddi-2018-AMSGrad,zhou-2024-convergence_of_adagrad} to modify our original~\cref{alg:svd-adam_algorithm}, thereby achieving the global convergence of the resulting Adam-like algorithm.

 For analytical tractability in our convergence analysis, we adopt a generalized and regularized form of the preconditioner, 
\begin{equation*}
    B_t=(J_t^{\top}J_t+\mu I)^{-p},
\end{equation*} 
where $\mu>0$ ensures $B_t$ is invertible. In addition, following the AMSGrad strategy~\cite{reddi-2018-AMSGrad,zhou-2024-convergence_of_adagrad}, we introduce the modified second moment estimate $\widehat{v}_t=\max\{\widehat{v}_{t-1},v_t\}$, and define the diagonal preconditioner as
\begin{equation}\label{equ:construction of widehat V}
    \widehat{D}_t=\operatorname{diag}(\widehat{v}_t+\epsilon).
\end{equation}
This construction~\cref{equ:construction of widehat V} guarantees that the sequence $\{\widehat{D}_t\}_{t\geq 1}$ forms a non-decreasing sequence of positive definite matrices, an essential property for proving convergence. Accordingly, the resulting update rule is given by 
\begin{equation*}
    \theta_{t+1}=\theta_t-\alpha\widehat{D}_t^{-\frac{1}{2}}m_t.
\end{equation*}

To prove the global convergence for the modified version of Algorithm~\ref{alg:svd-adam_algorithm}, rather than directly applying the descent lemma to the parameter sequence $\{\theta_t\}_{t\geq 1}$,
we follow the analysis framework of \cite{yang-2016-unified_convergence_analysis_stochastic,zhou-2024-convergence_of_adagrad}, and introduce the following auxiliary sequence $\{z_n\}_{n\geq 1}$ by 
    \begin{equation}\label{equ:the construction of z_t}
         z_t=\theta_t+\dfrac{\beta_1}{1-\beta_1}(\theta_t - \theta_{t-1})=\frac{1}{1-\beta_1}\theta_t - \frac{\beta_1}{1-\beta_1}\theta_{t-1}.
    \end{equation}
Here we adopt the convention that $\theta_0=\theta_1$.

Throughout this section, all quantities, such as $\theta_t,\lambda_t, m_t,v_t$ and $\widehat{v}_t$,
 are generated by the modified version of Algorithm~\ref{alg:svd-adam_algorithm} described above. For notational simplicity, we refer to it as the modified Algorithm~\ref{alg:svd-adam_algorithm} in the statements that follow.
\subsection{Assumptions}
\label{subsubsec:assump}
In this subsection, we introduce several assumptions that are essential in establishing the \cref{thm:convergence of the modified algorithm}. Recall that $f$ is the objective function defined in~\cref{equ:objective function}.

\begin{assumption}\label{assump:bounded gradient}
    $f$ has a bounded gradient, i.e., there exists a constant $M$, such that
           \begin{equation*}
               \|\nabla f(x)\|_{\infty}\leq M,\quad\forall x\in\mathbb{R}^m.
           \end{equation*}
\end{assumption}

\begin{assumption}\label{assump:L-smoothness}
    $f$ is $L$-smooth, i.e.,
           \begin{equation*}\label{gradient f is Lipschitz continuous}
               \|\nabla f(x)-\nabla f(y)\|\leq L\|x-y\|,\quad\forall x, y\in\mathbb{R}^m.
           \end{equation*}
\end{assumption}

\begin{assumption}\label{assump:bounded preconditioner}
The preconditioner $B_t=(J_t^{\top}J_t+\epsilon I)^{-p}$ is uniformly bounded and has a uniformly bounded inverse.
\end{assumption}

\begin{remark}\label{rem:notation for the upper bound of the matrix norm}
$\quad$
\begin{enumerate}
    \item[(i)] Since all norms on a finite-dimensional space are equivalent, the property of being uniformly bounded and having a uniformly bounded inverse is independent of the chosen norm.
    \item[(ii)] More precisely, for any given matrix norm $\|\cdot\|_{\star}$, there exists a constant $C_{\star}$  such that 
\begin{equation*}\label{equ:upper bound for the preconditioner}
    \|B_{t}\|_{\star}\leq C_{\star},\quad \|B_{t}^{-1}\|_{\star}\leq C_{\star}.
\end{equation*}
In particular, we denote the corresponding constant by $C_2$ for the spectral norm $\|\cdot\|_2$. Moreover, $B_t$ defines a linear operator acting on matrices via left multiplication, $A\to B_tA$, where the space of matrices is endowed with the matrix norm $\|\cdot\|_{\star}$. In this case, we further assume that the corresponding induced operator norms satisfy
\begin{equation}\label{equ:upper bound for operator norm of the preconditioner}
    \|B_t\|_{\star}^{\mathrm{op}}\leq C_{\star}^{\mathrm{op}},\quad \|B_t^{-1}\|_{\star}^{\mathrm{op}}\leq C_{\star}^{\mathrm{op}}.
\end{equation}
     \item [(iii)] Notice that $\lambda_t=B_tg_t=B_t\nabla f(\theta_t)$, we obtain
     \begin{equation}\label{equ:lambdat is bounded}
         \|\lambda_t\|_{\infty}\leq \|B_t\|_{\infty}\|\nabla f(\theta_t)\|_{\infty}\leq C_{\infty}M,
     \end{equation}
     which indicates that the preconditioned direction remains uniformly bounded.
\end{enumerate}
\end{remark}

With the preceding lemmas in place, we are prepared to present the proof of \cref{thm:convergence of the modified algorithm}.

\subsection{Convergence theorem}
 We are now ready to present the main convergence result for the modified Algorithm~\ref{alg:svd-adam_algorithm}. Our analysis builds on the framework of~\cite{zhou-2024-convergence_of_adagrad}, while incorporating several key modifications required by the SVD-based preconditioning used in our algorithmic update.

\begin{theorem}\label{thm:convergence of the modified algorithm}
  Consider the modified Algorithm~\ref{alg:svd-adam_algorithm} with hyperparameters $\beta_1,\beta_2$ and constant step size $\alpha$. Assume that 
$      \beta_1<\sqrt{\beta_2},\alpha_t=\alpha.
 $  Suppose that the gradient sequence  $\{g_t\}_{t=1}^T$ generated by the modified Algorithm~\ref{alg:svd-adam_algorithm} satisfies
$      \|g_{1:T,i}\|_2\leq M T^{s},\quad\forall\, T\geq 1, 0<s\leq\frac{1}{2}.
 $  Then the sequence of parameters $\{\theta_t\}_{t\geq 1}$ produced by the modified Algorithm~\ref{alg:svd-adam_algorithm} satisfies
  \begin{equation}\label{equ:convergence bound in main theorem}
    \begin{split}
        \frac{1}{T-1}\sum_{t=2}^T\|\nabla f(\theta_t)\|_2^2&\leq\frac{R_1}{(T-1)\alpha}+ \frac{R_2}{T-1}+{R_3}\frac{T^{\frac{3}{4}+\frac{s}{2}}}{T-1}+R_4\frac{T^{\frac{1}{2}+s}}{T-1}\alpha,
    \end{split}
  \end{equation}
where
\begin{equation*}
\begin{split}
\Delta_f&=f(\theta_1)-\inf_{\theta\in\mathbb{R}^m}f(\theta),~
R_1=\left(C_2^{\mathrm{op}})^2 \sqrt{C_{\infty}^2M^2+\epsilon}\right)\Delta_f,
\\
R_2 &= mM\left[\frac{1}{\sqrt{1-\beta_2}}+\frac{\beta_1MC_{1,1}^{\mathrm{op}}}{(1-\beta_1)\sqrt{\epsilon}}\right](C_2^{\mathrm{op}})^2 \sqrt{C_{\infty}^2M^2+\epsilon}
\\
R_3&=\sqrt{m}\left[\left(\frac{M\beta_1}{\sqrt{1-\beta_1}}+1\right)\sqrt{MC_2K}(1+C_2)\right]\left((C_2^{\mathrm{op}})^2 \sqrt{C_{\infty}^2M^2+\epsilon}\right),
\\
R_4&= mMKLC_2\left(\frac{C_2\beta_1(2+\beta_1)}{2(1-\beta_1)} + \frac{\beta_1^2-\beta_1+1}{(1-\beta_1)}\right)\left((C_2^{\mathrm{op}})^2 \sqrt{C_{\infty}^2M^2+\epsilon}\right).
\end{split}
\end{equation*}
\end{theorem}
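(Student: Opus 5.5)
The plan follows the AMSGrad/Adam framework of~\cite{zhou-2024-convergence_of_adagrad}: apply the descent lemma (Assumption~\ref{assump:L-smoothness}) to the auxiliary sequence $z_t$ in~\cref{equ:the construction of z_t} rather than to $\theta_t$. The first step is to compute $z_{t+1}-z_t$. Using $m_t=\beta_1 m_{t-1}+(1-\beta_1)\lambda_t$ and $\theta_{t+1}-\theta_t=-\alpha\widehat D_t^{-1/2}m_t$, a direct calculation gives
\begin{equation*}
z_{t+1}-z_t=-\alpha\widehat D_t^{-1/2}\lambda_t-\frac{\alpha\beta_1}{1-\beta_1}\bigl(\widehat D_t^{-1/2}-\widehat D_{t-1}^{-1/2}\bigr)m_{t-1}.
\end{equation*}
The second term is nonzero only because the diagonal preconditioner changes. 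Since $\widehat D_t$ is non-decreasing, $\sum_t\|\widehat D_{t-1}^{-1/2}-\widehat D_t^{-1/2}\|$ telescopes coordinatewise and is bounded by $m/\sqrt{\epsilon}$. Together with $\|m_t\|_\infty\le C_\infty M$ from~\cref{equ:lambdat is bounded}, this yields the $R_2$ term.

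The second step is the descent inequality
\begin{equation*}
f(z_{t+1})\le f(z_t)+\langle\nabla f(z_t),z_{t+1}-z_t\rangle+\tfrac L2\|z_{t+1}-z_t\|^2.
\end{equation*}
I will split $\nabla f(z_t)=\nabla f(\theta_t)+(\nabla f(z_t)-\nabla f(\theta_t))$. The main term is $-\alpha\langle g_t,\widehat D_t^{-1/2}B_tg_t\rangle$. Here the new ingredient relative to plain AMSGrad appears: $\widehat D_t^{-1/2}B_t$ is not diagonal, so positivity is not automatic. I will rewrite $g_t=B_t^{-1}\lambda_t$, so the main term becomes $-\alpha\langle B_t^{-1}\lambda_t,\widehat D_t^{-1/2}\lambda_t\rangle$. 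The idea is to compare this with $\|\lambda_t\|^2_{\widehat D_t^{-1/2}}$ and then use $\widehat D_t\preceq(C_\infty^2M^2+\epsilon)I$ and $\|g_t\|\le C_2\|\lambda_t\|$, obtaining
\begin{equation*}
\alpha\|g_t\|^2\big/\bigl((C_2^{\rm op})^2\sqrt{C_\infty^2M^2+\epsilon}\bigr).
\end{equation*}
This explains the common factor multiplying every $R_i$. Controlling the non-commutativity of $B_t$ with $\widehat D_t$ through the operator-norm bounds~\cref{equ:upper bound for operator norm of the preconditioner} is the step I expect to be the main obstacle. My first attempt is to treat $B_t$ as a bounded, boundedly invertible operator and absorb the cross term, with the commutator error going into a lower-order remainder.

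The third step bounds the remaining terms. The cross term $\langle\nabla f(z_t)-\nabla f(\theta_t),z_{t+1}-z_t\rangle$ is at most $L\|z_t-\theta_t\|\,\|z_{t+1}-z_t\|$, where $z_t-\theta_t=\frac{\beta_1}{1-\beta_1}(\theta_t-\theta_{t-1})$. Both factors are controlled by $\alpha\|\widehat D^{-1/2}m\|$, and the quadratic term is handled the same way. This reduces everything to sums $\sum_t\|\widehat D_t^{-1/2}m_t\|^2$ and $\sum_t\|\widehat D_t^{-1/2}\lambda_t\|^2$. I will bound these using the standard AMSGrad lemmas (Appendix~B), which rely on $\beta_1<\sqrt{\beta_2}$ and the growth hypothesis $\|g_{1:T,i}\|\le MT^s$ transferred to $\lambda$ through $B_t$. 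The typical bound is $\sum_t\|\widehat D_t^{-1/2}m_t\|^2\lesssim K\sum_i\|\lambda_{1:T,i}\|\lesssim \sqrt m\,MKT^{1/2+s}$, which gives the $R_4\alpha T^{1/2+s}$ term. The first-order cross terms are handled by Cauchy--Schwarz against $\bigl(\sum_t\|\cdot\|^2\bigr)^{1/2}$, which produces the $R_3T^{3/4+s/2}$ term.

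Finally, I sum the descent inequality from $t=2$ to $T$, telescope $f(z_2)-f(z_{T+1})\le\Delta_f$ (using $z_1=\theta_1$), divide by $\alpha(T-1)$, and multiply through by the common constant to obtain~\cref{equ:convergence bound in main theorem}.
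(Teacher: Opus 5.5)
You have correctly located the crux but not resolved it, and the resolution you sketch would fail. With $A_t=\alpha\widehat D_t^{-1/2}$, the main term $\langle g_t,A_tB_tg_t\rangle$ need not be nonnegative. Both factors are symmetric positive definite, yet the symmetric part of $A_tB_t$ can be indefinite: for $A=\mathrm{diag}(1,100)$, $B$ with unit diagonal and off-diagonal entries $0.9$, and $g=(10,-1)^{\top}$, one gets $g^{\top}ABg=-709$. Uniform bounds on $\|B_t^{\pm1}\|$ and $\|A_t^{\pm1}\|$ do not exclude this, since the entries of $\widehat D_t$ may range from $\epsilon$ to $C_\infty^2M^2+\epsilon$. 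Rewriting $g_t=B_t^{-1}\lambda_t$ only moves the same issue to $\langle B_t^{-1}\lambda_t,A_t\lambda_t\rangle$. Nor is the discrepancy a lower-order remainder: $\langle g_t,A_t\lambda_t\rangle-\langle\lambda_t,A_t\lambda_t\rangle=\langle(I-B_t)g_t,A_t\lambda_t\rangle$ is of size $\alpha\|g_t\|_2^2$. That is exactly the order of the descent term, and no small parameter is available to absorb it.

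The missing idea is the paper's splitting $\nabla f(z_t)=(I-B_t)\nabla f(z_t)+B_t\nabla f(z_t)$ before pairing with $z_{t+1}-z_t$. The principal term then becomes $\langle B_tg_t,A_tB_tg_t\rangle=\|A_t^{1/2}B_tg_t\|_2^2\ge0$. Writing $g_t=B_t^{-1}A_t^{-1/2}(A_t^{1/2}B_tg_t)$ and using Lemma~5 then yields the common factor $(C_2^{\mathrm{op}})^2\sqrt{C_\infty^2M^2+\epsilon}/\alpha$. The leftover $\langle(I-B_t)\nabla f(z_t),z_{t+1}-z_t\rangle$ is not absorbed at all. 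It is bounded crudely by $(1+C_2)\sup_z\|\nabla f(z)\|_2\,\|z_{t+1}-z_t\|_2$ and summed using the total-variation bounds $\sum_t\|A_tm_t\|_2,\ \sum_t\|A_t\lambda_t\|_2=O(\alpha T^{3/4+s/2})$ of Lemma~12. This is precisely where the $R_3$ term comes from, as its factor $(1+C_2)$ and the absence of $\alpha$ in $R_3T^{3/4+s/2}/(T-1)$ both indicate.

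Your write-up attributes $R_3$ to unspecified first-order cross terms. However, the only cross term you actually identify, $\langle\nabla f(z_t)-\nabla f(\theta_t),z_{t+1}-z_t\rangle$, is second order and feeds $R_4$. The first-order term that really produces $R_3$ is the one you hoped to treat as a lower-order remainder.

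A smaller bookkeeping point: summing from $t=2$ does not give $f(z_2)-f(z_{T+1})\le\Delta_f$ directly. You need the separate $t=1$ step from $z_1=\theta_1$ to $z_2$, which supplies the $mM/\sqrt{1-\beta_2}$ part of $R_2$.
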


\begin{proof}
Rather than applying Lemma~6 directly to the primary iterates $\{\theta_t\}$, as in conventional convergence analyses, we apply it to the auxiliary sequence $\{z_t\}_{t\geq 1}$ defined in \cref{equ:the construction of z_t}, following the approach in~\cite{zhou-2024-convergence_of_adagrad}:
   \begin{equation}\label{equ:applying descent lemma to z_t}
   \begin{split}
       f(z_{t+1})&\leq f(z_t)+\langle\nabla f(z_t),z_{t+1}-z_t\rangle+\frac{L}{2}\|z_{t+1}-z_t\|_2^2.
   \end{split} 
 \end{equation}
We decompose the inner product term on the right-hand side of \cref{equ:applying descent lemma to z_t} as follows:
   \begin{equation*}
   \begin{split}
         \langle\nabla f(z_t), z_{t+1}-z_t\rangle&=\langle(I-B_t)\nabla f(z_t), z_{t+1}-z_t\rangle+\langle B_t\nabla f(z_t)-B_t\nabla f(\theta_t), z_{t+1}-z_t\rangle\\&\quad+\langle B_t\nabla f(\theta_t), z_{t+1}-z_t\rangle. 
   \end{split}
   \end{equation*}
Thus, we obtain
   \begin{equation}\label{Four terms decomposition}
   \begin{split}
       f(z_{t+1})&\leq f(z_t)+ \underbrace{\langle(I-B_t)\nabla f(z_t), z_{t+1}-z_t\rangle}_{I_{1,t}}+\underbrace{\langle B_t\nabla f(z_t)-B_t\nabla f(\theta_t), z_{t+1}-z_t\rangle}_{I_{2,t}}\\&\quad+\underbrace{\langle B_t\nabla f(\theta_t), z_{t+1}-z_t\rangle}_{I_{3,t}}+\underbrace{\frac{L}{2}\|z_{t+1}-z_t\|_2^2}_{I_{4,t}}.
   \end{split}  
   \end{equation}
Next, we analyze each term in order to establish convergence.

\paragraph{Bounding $I_{1,t}$} By applying the Cauchy-Schwartz inequality, we obtain
\begin{equation}\label{equ:analysis of I_{1,t} first}
    \begin{split}
        I_{1,t}&\leq \|(I-B_t)\nabla f(z_t)\|_2\|z_{t+1}-z_t\|_2
        \leq \sup_{t\geq 1}\|(I-B_t)\|_{2}\cdot\sup_{z\in\mathbb{R}^m}\|\nabla f(z)\|_
        2\cdot\|z_{t+1}-z_t\|_2.
    \end{split}
\end{equation}
By Lemma~8, we have
\begin{equation}\label{equ:analysis of I_{1,t} second}
    \begin{split}
        I_{1,t} 
        &\leq (1+C_2)M\left(\frac{\beta_1}{1-\beta_1}\|\theta_t-\theta_{t-1}\|_2+\|A_t\lambda_t\|_2\right).
    \end{split}
\end{equation}
Here $C_2$ is the corresponding upper bound as defined in Remark~\ref{rem:notation for the upper bound of the matrix norm}. Summing \eqref{equ:analysis of I_{1,t} second} over time $t=2,\ldots,T$ yields

\begin{equation}\label{equ:final bound for t1}
\begin{split}
     \sum_{t=2}^{T}I_{1,t}&\leq \frac{M\beta_1(1+C_{2})}{1-\beta_1}\sum_{t=2}^{T}\|\theta_t-\theta_{t-1}\|_2+(1+C_{2})\sum_{t=2}^T\|A_t\lambda_t\|_2
     \\
     &=\frac{M\beta_1(1+C_2)}{1-\beta_1}\sum_{t=1}^{T-1}\|A_{t}m_{t}\|_2 +(1+C_2)\sum_{t=2}^T\|A_{t}\lambda_t\|_2.
\end{split}
\end{equation}

\paragraph{Bounding $I_{2,t}$} Applying the Cauchy-Schwartz  inequality and Assumption~\ref{assump:L-smoothness} yields
\begin{equation}
    \begin{aligned}\label{equ:analysis of I_{2,t} first}
      \left|I_{2,t}\right|&\leq\|B_{t}\|_2\|\nabla f(z_t)-\nabla f(\theta_t)\|_2\|z_{t+1}-z_t\|_2
            \leq  C_{2}L\|z_t-\theta_t\|_2\|z_{t+1}-z_t\|_2.   
\end{aligned}
\end{equation}
 Using the definition of $z_t$ in~\cref{equ:the construction of z_t} together with Lemma~8, we have
\begin{equation}\label{equ:analysis of I_{2,t} second}
\begin{aligned}
    |I_{2,t}| 
    &\leq C_{2}L\left(\frac{\beta_1}{1-\beta_1}\right) \|\theta_{t}-\theta_{t-1}\|_2\left(\frac{\beta_1}{1-\beta_1}\|\theta_t-\theta_{t-1}\|_2+\|A_t\lambda_t\|_2\right)
         \\
         &\leq C_{2} L\left(\frac{\beta_1}{1-\beta_1}\right)^2\|\theta_{t}-\theta_{t-1}\|_2^2+\frac{C_{2}L}{2}\left(\frac{\beta_1}{1-\beta_1}\right)(\|\theta_t-\theta_{t-1}\|_2^2+\|A_t\lambda_t\|_2^2).
\end{aligned}
\end{equation}

Summing~\cref{equ:analysis of I_{2,t} second} over time $t=2,\ldots, T$, we obtain
\begin{equation}\label{equ:final bound for t2}
\begin{split}
    \sum_{t=2}^T I_{2,t}&\leq \frac{C_2L}{2}\left(\frac{\beta_1}{1-\beta_1}\right)\sum_{t=2}^{T}\|\theta_t-\theta_{t-1}\|_2^2+\frac{C_2L}{2}\left(\frac{\beta_1}{1-\beta_1}\right)\sum_{t=2}^T\|A_t\lambda_t\|_2^2
    \\
    &\quad+ C_2L\left(\frac{\beta_1}{1-\beta_1}\right)^2\sum_{t=2}^T\|\theta_t-\theta_{t-1}\|_2^2
    \\
    &=\frac{C_2L}{2}\frac{\beta_1(1+\beta_1)}{(1-\beta_1)^2}\sum_{t=1}^{T-1}\|A_{t}m_{t}\|_2^2+\frac{C_2L}{2}\left(\frac{\beta_1}{1-\beta_1}\right)\sum_{t=2}^T\|A_t\lambda_t\|_2^2.
\end{split}
\end{equation}

\paragraph{Decomposing $I_{3,t}$}
By using Lemma~7, we have
\begin{equation}\label{equ:analysis of I_{3,t}}
        \begin{split}
            I_{3,t}&=\langle B_t\nabla f(\theta_t),z_{t+1}-z_t\rangle=\left\langle B_t\nabla f(\theta_t),\frac{\beta_1}{1-\beta_1}A_{t-1}m_{t-1}-\frac{\beta_1}{1-\beta_1}A_tm_{t-1}-A_t\lambda_t\right\rangle\\            &=\underbrace{\frac{\beta_1}{1-\beta_1}\left\langle B_t\nabla f(\theta_t), A_{t-1}m_{t-1}-A_tm_{t-1}\right\rangle}_{I_{3,t}^{(A)}} -\underbrace{\langle B_t\nabla f(\theta_t), A_t\lambda_t\rangle}_{I_{3,t}^{(B)}}
        \end{split}
    \end{equation}
For the first part, we have
\begin{equation}\label{equ:analysis of I_{3,t}A first}
\begin{aligned}
     \left|I_{3,t}^{(A)}\right|
     &\leq \frac{\beta_1}{1-\beta_1}\sup_{t\geq 1}\|\nabla f(\theta_t)\|_{\infty}\cdot\|B_t(A_{t-1}-A_t)\|_{1,1}\cdot\sup_{t\geq1}\|m_{t-1}\|_{\infty}
     \\
     &\leq\frac{\beta_1M^2}{1-\beta_1}\cdot\sup_{t\geq 1}\|B_t\|_{1,1}^{\mathrm{op}}\cdot\|A_{t-1}-A_t\|_{1,1},
\end{aligned}
\end{equation}
where $\|A_t\|_{1,1}=\sum_{i,j}|A_{t,ij}|$ and ${A_{t,ij}}$ denotes the $(i,j)$-th entry of $A_t$.
Due to the construction in~\cref{equ:construction of widehat V}, the sequence $\{\widehat{D}_t\}_{t\geq 1}$ is non-decreasing in the sense of positive semi-definiteness. Consequently, the sequence $\{A_t\}_{t\geq 1}$ defined as $A_t=\alpha\widehat{D}_t^{-1/2}$ is non-increasing and consists of diagonal matrices. Therefore,
\begin{equation}\label{equ:norm of difference of At}
    \|A_{t-1}-A_t\|_{1,1}=\|A_{t-1}\|_{1,1}-\|A_t\|_{1,1}.
\end{equation}
Combining~\cref{equ:analysis of I_{3,t}A first} and~\cref{equ:norm of difference of At}, we obtain
\begin{equation}\label{equ:analysis of I_{3,t}A second}
\begin{aligned}
     \left|I_{3,t}^{(A)}\right|
    &\leq \frac{\beta_1M^2}{1-\beta_1}\cdot\sup_{t\geq 1}\|B_t\|_{1,1}^{\mathrm{op}}(\|A_{t-1}\|_{1,1}-\|A_t\|_{1,1})\leq \frac{\beta_1M^2C_{1,1}^{\mathrm{op}}}{1-\beta_1}(\|A_{t-1}\|_{1,1}-\|A_t\|_{1,1}).
\end{aligned}
\end{equation}
Summing the bound in~\cref{equ:analysis of I_{3,t}A second} over time $t=2,\ldots,T$ leads to
    \begin{equation}\label{equ:final bound for t3a}
        \sum_{t=2}^TI_{3,t}^{(A)}\leq \frac{\beta_1M^2C_{1,1}^{\mathrm{op}}}{1-\beta_1}\|A_1\|_{1,1}=O(1).
    \end{equation}
For the second part, we have
  \begin{equation*}
    \begin{split}
        I_{3,t}^{(B)}&=\langle B_t\nabla f(\theta_
        t), A_t\lambda_t\rangle=\langle B_t\nabla f(\theta_t), A_tB_t\nabla f(\theta_t)\rangle=\big\| A_t^{\frac{1}{2}} B_t\nabla f(\theta_t)\big\|_2^2.
    \end{split}
    \end{equation*}
   To further relate $I_{3,t}^{(B)}$ to the squared gradient norm, we note that
    \begin{equation}\label{equ:gradient bounded by I_{3,t}}
        \begin{split}
        \|\nabla f(\theta_t)\|_2&=\big\|\big(A_t^{\frac{1}{2}}B_t\big)^{-1}\big(A_{t}^{\frac{1}{2}}B_t\big)\nabla f(\theta_t)\big\|_2\leq\sup_{t\geq 1}\big\|B_t^{-1}A_t^{-\frac{1}{2}}\big\|_2\cdot \sqrt{I_{3,t}^{(B)}}.
        \end{split}
    \end{equation}
By using ~Lemma~5, we have
\begin{equation}\label{equ:upper bound for BtAt}
    \sup_{t\geq 1}\big\|B_t^{-1}A_t^{-\frac{1}{2}}\big\|_2\leq \sup_{t\geq 1}\big\|B_t\big\|_2^{\mathrm{op}}\cdot\big\|A_t^{-\frac{1}{2}}\big\|_2\leq{C}_2^{\mathrm{op}}\cdot\sqrt[4]{\frac{C_{\infty}^2M^2+\epsilon}{\alpha^2}},
\end{equation}
Combining~\cref{equ:gradient bounded by I_{3,t}} and~\cref{equ:upper bound for BtAt} gives
\begin{equation}\label{equ:final upper bound of gradient by I_{3,t}B}
\|\nabla f(\theta_t)\|^2_2\leq \frac{(C_2^{\mathrm{op}})^2 \sqrt{C_{\infty}^2M^2+\epsilon}}{\alpha} I_{3,t}^{(B)}.
\end{equation}
Summing~\cref{equ:final upper bound of gradient by I_{3,t}B} over time $t=2,\ldots,T$ yields
\begin{equation}\label{equ:final bound for t3b}
    \begin{split}
    \frac{1}{T-1}\sum_{t=2}^T\|\nabla f(\theta_t)\|_2^2\leq \frac{(C_2^{\mathrm{op}})^2 \sqrt{C_{\infty}^2M^2+\epsilon}}{(T-1)\alpha}\sum_{t=2}^TI_{3,t}^{(B)}.
    \end{split}
\end{equation}

\paragraph{Bounding $I_{4,t}$} By ~Lemma~8 and the Cauchy-Schwartz inequality, we have.
   \begin{equation}\label{analysis of I_{4,t}}
       \begin{split}
           I_{4,t}           &\leq L\left(\frac{\beta_1}{1-\beta_1}\right)^2\|\theta_t-\theta_{t-1}\|_2^2+L\|A_t\lambda_t\|_2^2.
       \end{split}
   \end{equation}
Summing~\eqref{analysis of I_{4,t}} over time $t=2,\ldots, T$ implies that
\begin{equation}\label{equ:final bound for t4}
\begin{split}
     \sum_{t=2}^T I_{4,t}&\leq L\left(\frac{\beta_1}{1-\beta_1}\right)^2\sum_{t=2}^T\|\theta_t-\theta_{t-1}\|_2^2+L\sum_{t=2}^T \|A_t\lambda_t\|_2^2
     \\
     &= L\left(\frac{\beta_1}{1-\beta_1}\right)^2\sum
     _{t=1}^{T-1}\|A_{t}m_{t}\|_2^2+L\sum_{t=2}^T\|A_t\lambda_t\|_2^2.
\end{split}
\end{equation}

\paragraph{Summarization}

Therefore, we take the telescoping sum of \eqref{Four terms decomposition} for $t=2,\ldots, T$ and have    \begin{equation}\label{equ:telescoping sum}
        \begin{split}
            f(z_{T+1})-f(z_2)\leq \sum_{t=2}^TI_{1,t}+\sum_{t=2}^T I_{2,t}+\sum_{t=2}^T I_{3,t}^{(A)}-\sum_{t=2}^T I_{3,t}^{(B)}+\sum_{t=2}^T I_{4,t}.
        \end{split}
    \end{equation}
 For $t=1$, by Lemma~7, we have
 \begin{equation}\label{equ:special case}
     \begin{split}
         f(z_2)-f(z_1)
         &\leq \langle\nabla f(z_1), z_2-z_2\rangle+\frac{L}{2}\|z_2-z_1\|_2^2\leq -\langle\nabla f(\theta_1),A_1\lambda_1\rangle+\frac{L}{2}\|A_1\lambda_1\|_2^2.
     \end{split}
 \end{equation}
Since\[ |\langle\nabla f(\theta_1),A_1\lambda_1\rangle|\leq m\|\nabla f(\theta_1)\|_{\infty}\|A_1\lambda_1\|_{\infty}\leq  mM\big\|V_1^{-\frac{1}{2}}\lambda_1\big\|_{\infty}\alpha
\leq\frac{mM\alpha}{\sqrt{1-\beta_2}},\] combining~\cref{equ:telescoping sum} and~\cref{equ:special case} gives
 \begin{equation}
     \begin{split}
            f(z_{T+1})-f(z_1)\leq \sum_{t=2}^TI_{1,t}+\sum_{t=2}^T I_{2,t}+\sum_{t=2}^T I_{3,t}^{(A)}-\sum_{t=2}^T I_{3,t}^{(B)}+\sum_{t=1}^T I_{4,t}+\frac{mM\alpha}{\sqrt{1-\beta_2}}.
        \end{split}
 \end{equation}
 Employing~\cref{equ:final bound for t3b} and rearranging terms of~\cref{equ:telescoping sum} gives
\begin{equation*}
    \begin{split}
       \frac{1}{T-1}\sum_{t=2}^T\|\nabla f(\theta_t)\|_2^2
       &\leq \frac{(C_2^{\mathrm{op}})^2 \sqrt{C_{\infty}^2M^2+\epsilon}}{(T-1)\alpha}\sum_{t=2}^TI_{3,t}^{(B)}\leq\frac{(C_2^{\mathrm{op}})^2 \sqrt{C_{\infty}^2M^2+\epsilon}}{(T-1)\alpha}\left(f(z_1)-f(z_{T+1})+\frac{mM\alpha}{\sqrt{1-\beta_2}}\right)
        \\
        & +\frac{(C_2^{\mathrm{op}})^2 \sqrt{C_{\infty}^2M^2+\epsilon}}{(T-1)\alpha}\left(\sum_{t=2}^TI_{1,t} + \sum_{t=2}^T I_{2,t}+\sum_{t=2}^T I_{3,t}^{(A)}+\sum_{t=1}^T I_{4,t}\right).
        \\
        &
    \end{split}
\end{equation*}
Combining all the bounds~\cref{equ:final bound for t1}, \cref{equ:final bound for t2}, \cref{equ:final bound for t3a} and~\cref{equ:final bound for t4} from previous steps, we obtain
\begin{equation}\label{equ:bound by variation and delta}
\begin{split}
\frac{1}{T-1}\sum_{t=2}^T\|\nabla f(\theta_t)\|_2^2 
&\leq \frac{(C_2^{\mathrm{op}})^2 \sqrt{C_{\infty}^2M^2+\epsilon}}{(T-1)\alpha}\Bigg[f(z_1)-f(z_{T+1}) + \frac{mM\alpha}{\sqrt{1-\beta_2}}+\frac{\beta_1M^2C_{1,1}^{\mathrm{op}}}{1-\beta_1}\|A_1\|_{1,1} 
\\
& + \frac{M\beta_1(1+C_2)}{1-\beta_1}\sum_{t=1}^{T}\|A_t m_t\|_2 + (1+C_2)\sum_{t=1}^T\|A_t\lambda_t\|_2 
\\
& + \frac{\beta_1L}{2(1-\beta_1)^2}(C_2\beta_1+2\beta_1+C_2)\sum_{t=1}^{T}\|A_t m_t\|_2^2 + \frac{L}{2(1-\beta_1)}(C_2\beta_1-2\beta_1+2)\sum_{t=1}^T\|A_t\lambda_t\|_2^2\Bigg].
\end{split}
\end{equation}
Applying~Lemma~11 and~Lemma~12, we obtain
\begin{equation}
 \begin{split}
\frac{1}{T-1}\sum_{t=2}^T\|\nabla f(\theta_t)\|_2^2 
&\leq \frac{(C_2^{\mathrm{op}})^2 \sqrt{C_{\infty}^2M^2+\epsilon}}{(T-1)\alpha}\Bigg[
f(z_1)-f(z_{T+1}) 
+ \frac{mM\alpha}{\sqrt{1-\beta_2}}+\frac{\beta_1M^2C_{1,1}^{\mathrm{op}}}{1-\beta_1}\big\|V_1^{-\frac{1}{2}}\big\|_{1,1}\alpha 
\\
&+ \frac{M\beta_1(1+C_2)\sqrt{K}}{\sqrt{1-\beta_1}}\left(\sum_{i=1}^m\|\lambda_{1: T,i}\|_2\right)^{\frac{1}{2}}T^{\frac{3}{4}}\alpha
+ (1+C_2)\sqrt{K}\left(\sum_{i=1}^m\|\lambda_{1: T,i}\|_2\right)^{\frac{1}{2}} T^{\frac{3}{4}}\alpha
\\
& + \frac{KL\beta_1}{2(1-\beta_1)}(C_2\beta_1+2\beta_1+C_2)\left(\sum_{i=1}^m\|\lambda_{1: T,i}\|_2\right) T^{\frac{1}{2}}\alpha^2
\\
& + \frac{KL}{2(1-\beta_1)}(C_2\beta_1-2\beta_1+2)\left(\sum_{i=1}^m\|\lambda_{1: T,i}\|_2\right)T^{\frac{1}{2}}\alpha^2\Bigg].
\end{split}
\end{equation}
Using~Lemma~10 together with the fact that $\|g_{1:T,i}\|\leq MT^s$ holds for all $T\geq 1$ and $1\leq i\leq m$, we have
\begin{equation*}
    \begin{split}
        \frac{1}{T-1}\sum_{t=2}^T\|\nabla f(\theta_t)\|_2^2&\leq\frac{R_1}{(T-1)\alpha}+ \frac{R_2}{T-1}+{R_3}\frac{T^{\frac{3}{4}+\frac{s}{2}}}{T-1}+R_4\frac{T^{\frac{1}{2}+s}}{T-1}\alpha.
    \end{split}
\end{equation*}
\end{proof}

\section{Numerical Experiments}
\label{sec:numerical_experiments}

\subsection{Function approximation}
\label{subsec:function_approximation}
We evaluate the proposed SPGD method against Adam on a function regression problem with
\begin{equation}
    f(\mathbf{x}) = \sin\Bigl(n \pi \sum_{i=1}^d x_i \Bigr), 
    \quad \mathbf{x}=(x_1,\dots,x_d) \in [0,1]^d,
\end{equation}
approximated by a neural network $u(\mathbf{x};\theta)$. Two experimental scenarios are considered: 

\begin{itemize}
    \item \textbf{Scenario I:} varying the frequency parameter $n \in \{3,5,7,9\}$ with fixed input dimension $d=1$, testing the effect of high-frequency oscillations, which are known to slow learning due to the frequency principle~\cite{xu-2019-frequency-principle}.
    \item \textbf{Scenario II:} varying the input dimension $d \in \{3,5,7,9\}$ with fixed $n=1$, examining performance in high-dimensional search spaces subject to the curse of dimensionality.
\end{itemize}

The network $u(\mathbf{x};\theta)$ is a fully connected feedforward network with $4$ hidden layers of width $64$ and Tanh activations, initialized via Xavier normal initialization with biases set to zero. Training minimizes the $L^2$ loss
$    \mathcal{L}(\theta) = \int_{[0,1]^d} |u(\mathbf{x};\theta) - f(\mathbf{x})|^2 \,\mathrm{d}\mathbf{x},
$
approximated using mini-batch sampling (batch size $256$) at each epoch. A fixed test set of size $2^{15}$ is used to evaluate convergence. All experiments are run for $15{,}000$ epochs and repeated $10$ times with different random seeds. 

Both optimizers use Adam hyperparameters $\beta_1=0.9$, $\beta_2=0.999$, $\epsilon=10^{-8}$, and an initial learning rate of $10^{-2}$. Learning rates decay via a staircase schedule: for Adam, the rate is halved every $1000$ epochs; for SPGD, it is reduced by a factor of $0.7$ every $100$ epochs in Scenario~I and by a factor of $0.9$ every $100$ epochs in Scenario~II, with a minimum enforced learning rate of $10^{-5}$.

\cref{fig:freq_d1,fig:dim_n1} display the convergence histories for the two scenarios, with each subplot corresponding to a specific frequency or dimension. Median test losses and interquartile ranges across $10$ runs are shown, smoothed using a rolling mean. \cref{tab:freq_d1_final,tab:freq_d1_milestone,tab:dim_n1_final,tab:dim_n1_milestone} summarize final test loss statistics and milestone epochs required to reach prescribed thresholds.

The results indicate that SPGD consistently outperforms Adam. Specifically, SPGD converges substantially faster: median epochs to reach thresholds such as $10^{-4}$ or $10^{-5}$ are reduced by roughly an order of magnitude, with success rates near $100\%$, even for high-frequency or high-dimensional settings where Adam often fails. Furthermore, SPGD achieves lower final test loss: across all tested frequencies and dimensions, median, best, and worst losses are consistently one to two orders of magnitude smaller than those obtained by Adam. These observations confirm the effectiveness of the proposed preconditioning in handling both oscillatory and high-dimensional optimization challenges.

\begin{figure}[t]
    \centering
    \includegraphics[width=\linewidth]{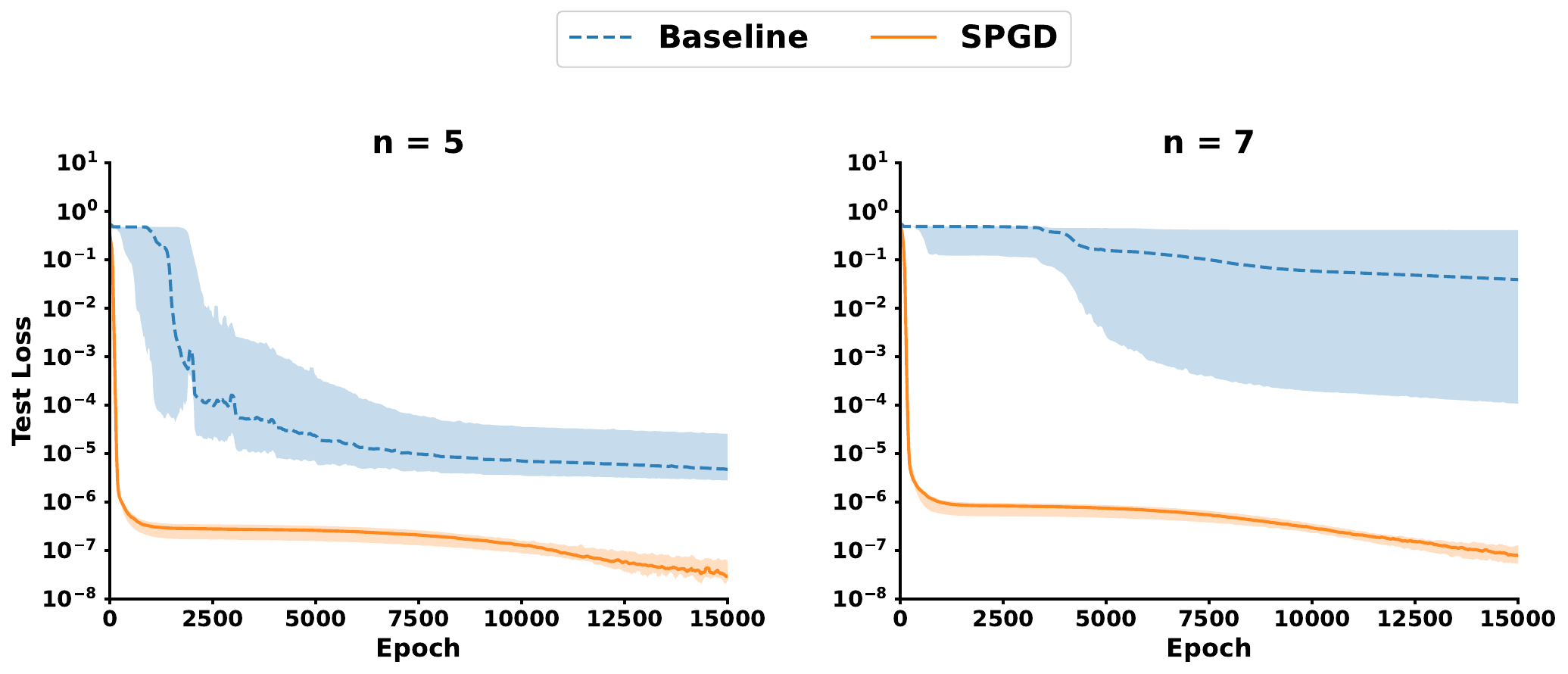}
    \caption{Scenario~I (Varying Frequency): Test loss versus the number of epochs with fixed dimension $d$ and varying frequency parameters $n \in \{5, 7\}$. The shaded region in each subplot indicates the interquartile range over 10 independent runs with different random seeds.}
    \label{fig:freq_d1}
\end{figure}

\begin{table}[t]
\centering\small
\caption{Scenario~I (Varying Frequency): Final test loss statistics comparing different frequency parameters $n$ over 10 independent runs.}
\label{tab:freq_d1_final}
\small
\begin{tabular}{c l c c c}
\toprule
$n$ & Optimizer & Median & Best & Worst \\
\midrule
3 & Adam & $1.28\times10^{-6}$ & $4.95\times10^{-7}$ & $2.45\times10^{-4}$ \\
3 & SPGD & $\mathbf{1.66\times10^{-8}}$ & $\mathbf{8.21\times10^{-9}}$ & $\mathbf{1.24\times10^{-7}}$ \\
\addlinespace
5 & Adam & $4.86\times10^{-6}$ & $1.40\times10^{-6}$ & $4.12\times10^{-4}$ \\
5 & SPGD & $\mathbf{3.16\times10^{-8}}$ & $\mathbf{1.20\times10^{-8}}$ & $\mathbf{8.48\times10^{-8}}$ \\
\addlinespace
7 & Adam & $3.93\times10^{-2}$ & $2.64\times10^{-6}$ & $4.87\times10^{-1}$ \\
7 & SPGD & $\mathbf{8.54\times10^{-8}}$ & $\mathbf{3.29\times10^{-8}}$ & $\mathbf{6.27\times10^{-7}}$ \\
\addlinespace
9 & Adam & $4.73\times10^{-1}$ & $6.25\times10^{-6}$ & $4.96\times10^{-1}$ \\
9 & SPGD & $\mathbf{2.52\times10^{-7}}$ & $\mathbf{1.25\times10^{-7}}$ & $\mathbf{7.13\times10^{-7}}$ \\
\bottomrule
\end{tabular}
\end{table}

\begin{table}[t]
\centering\small
\caption{Scenario~I (Varying Frequency): Milestone statistics (epochs and time) required to reach prescribed error thresholds. "Success" is defined as the percentage of independent runs that meet the threshold, and other values denote the median across multiple successful runs.}
\label{tab:freq_d1_milestone}
\small 
\setlength{\tabcolsep}{2.5pt}
\begin{tabular}{c l c c c c c c c c c}
\toprule
 &  & \multicolumn{3}{c}{Threshold $10^{-4}$}
 & \multicolumn{3}{c}{Threshold $10^{-5}$}
 & \multicolumn{3}{c}{Threshold $10^{-6}$} \\
\cmidrule(lr){3-5}\cmidrule(lr){6-8}\cmidrule(lr){9-11}
$n$ & Optimizer
& Epoch & Time & Success 
& Epoch & Time & Success 
& Epoch & Time & Success  \\
\midrule
3 & Adam
& 838 & \textbf{0.10} & 90\%
& 1597 & \textbf{0.20} & 90\%
& 5304 & 0.64 & 50\% \\
3 & SPGD
& \textbf{86} & 0.26 & \textbf{100}\%
& \textbf{108} & 0.33 & \textbf{100}\%
& \textbf{144} & \textbf{0.43} & \textbf{100}\% \\
\addlinespace
5 & Adam
& 1571 & \textbf{0.19} & 80\%
& 3314 & 0.40 & 60\%
& -- & -- & 0\% \\
5 & SPGD
& \textbf{104} & 0.31 & \textbf{100}\%
& \textbf{129} & \textbf{0.39} & \textbf{100}\%
& \textbf{241} & \textbf{0.73} & \textbf{100}\% \\
\addlinespace
7 & Adam
& 1392 & \textbf{0.17} & 30\%
& 3247 & \textbf{0.39} & 30\%
& -- & -- & 0\% \\
7 & SPGD
& \textbf{133} & 0.4 & \textbf{100}\%
& \textbf{176} & 0.53 & \textbf{100}\%
& \textbf{964} & \textbf{2.90} & \textbf{100}\% \\
\addlinespace
9 & Adam
& 1980 & \textbf{0.24} & 20\%
& 8196 & 0.99 & 20\%
& -- & -- & 0\% \\
9 & SPGD
& \textbf{151} & 0.46 & \textbf{100}\%
& \textbf{262} & \textbf{0.80} & \textbf{100}\%
& \textbf{8469} & \textbf{25.53} & \textbf{100}\% \\
\bottomrule
\end{tabular}
\end{table}

\begin{figure}[t]
    \centering
    \includegraphics[width=\linewidth]{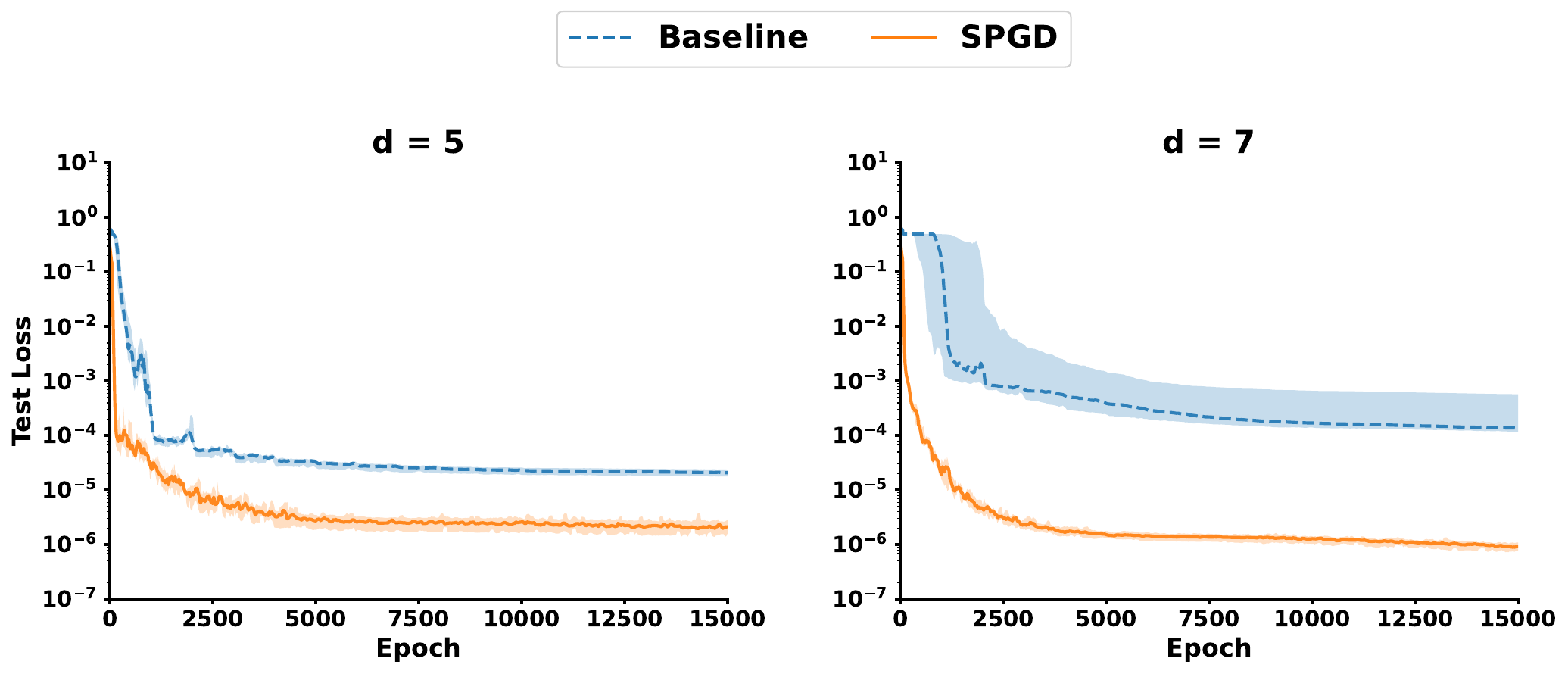}
    \caption{Scenario~II (Varying Dimension): Test loss versus the number of epochs with fixed frequency $n=1$ and varying input dimensions $d \in \{5, 7\}$. The shaded region in each subplot indicates the interquartile range over 10 independent runs with different random seeds.}
    \label{fig:dim_n1}
\end{figure}

\begin{table}[t]
\centering
\caption{Scenario~II (Varying Dimension): Final test loss statistics comparing different input dimensions $d$ over 10 independent runs.}
\label{tab:dim_n1_final}
\small
\begin{tabular}{c l c c c}
\toprule
$d$ & Optimizer & Median & Best & Worst \\
\midrule
3 & Adam 
& $7.12\times10^{-6}$ 
& $4.83\times10^{-6}$ 
& $1.01\times10^{-5}$ \\
3 & SPGD
& $\mathbf{8.43\times10^{-8}}$ 
& $\mathbf{5.72\times10^{-8}}$ 
& $\mathbf{1.90\times10^{-7}}$ \\
\addlinespace
5 & Adam 
& $2.08\times10^{-5}$ 
& $1.32\times10^{-5}$ 
& $1.58\times10^{-4}$ \\
5 & SPGD
& $\mathbf{2.13\times10^{-6}}$ 
& $\mathbf{6.07\times10^{-7}}$ 
& $\mathbf{5.35\times10^{-6}}$ \\
\addlinespace
7 & Adam 
& $1.37\times10^{-4}$ 
& $7.36\times10^{-5}$ 
& $6.70\times10^{-4}$ \\
7 & SPGD 
& $\mathbf{9.36\times10^{-7}}$ 
& $\mathbf{5.47\times10^{-7}}$ 
& $\mathbf{1.36\times10^{-6}}$ \\
\addlinespace
9 & Adam 
& $4.93\times10^{-1}$ 
& $7.06\times10^{-5}$ 
& $4.97\times10^{-1}$ \\
9 & SPGD
& $\mathbf{6.39\times10^{-6}}$ 
& $\mathbf{4.73\times10^{-6}}$ 
& $\mathbf{9.80\times10^{-6}}$ \\
\bottomrule
\end{tabular}
\end{table}

\begin{table}[t]
\centering
\caption{Scenario~II (Varying Dimension): Milestone statistics (epochs and time) required to reach prescribed error thresholds. "Success" is defined as the percentage of independent runs that meet the threshold, and other values denote the median across multiple successful runs.}
\label{tab:dim_n1_milestone}
\small
\setlength{\tabcolsep}{3pt}
\begin{tabular}{c l c c c c c c c c c}
\toprule
 &  & \multicolumn{3}{c}{Threshold $10^{-3}$}
 & \multicolumn{3}{c}{Threshold $10^{-4}$}
 & \multicolumn{3}{c}{Threshold $10^{-5}$} \\
\cmidrule(lr){3-5}\cmidrule(lr){6-8}\cmidrule(lr){9-11}
$d$ & Optimizer 
& Epoch & Time & Success 
& Epoch & Time & Success 
& Epoch & Time & Success \\
\midrule
3 & Adam 
& 470 & \textbf{0.06} & 100\%
& 1648 & \textbf{0.21} & 100\%
& 8902 & {1.13} & 100\% \\
3 & SPGD
& \textbf{67} & {0.21} & {100\%}
& \textbf{117} & 0.36 & {100\%}
& \textbf{316} & \textbf{0.95} & {100\%} \\
\addlinespace
5 & Adam 
& 463 & \textbf{0.06} & 100\%
& 1000 & \textbf{0.13} & 90\%
& -- & -- & -\% \\
5 & SPGD
& \textbf{72} & 0.23 & {100\%}
& \textbf{116} & {0.37} & \textbf{100\%}
& \textbf{1034} & \textbf{3.20} & \textbf{100\%} \\
\addlinespace
7 & Adam 
& 1790 & \textbf{0.23} & 100\%
& 7862 & \textbf{1.00} & 20\%
& -- & -- & 0\% \\
7 & SPGD
& \textbf{117} & {0.37} & {100\%}
& \textbf{384} & {1.22} & \textbf{100\%}
& \textbf{998} & \textbf{3.15} & \textbf{100\%} \\
\addlinespace
9 & Adam 
& 5266 & 0.69 & 20\%
& 10898 & 1.44 & 10\%
& -- & -- & 0\% \\
9 & SPGD
& \textbf{138} & \textbf{0.45} & \textbf{100\%}
& \textbf{208} & \textbf{0.67} & \textbf{100\%}
& \textbf{2739} & \textbf{8.63} & \textbf{100\%} \\
\bottomrule
\end{tabular}
\end{table}

\subsection{Poisson equation}
\label{subsec:poisson_equation}
We compare the proposed SPGD (\cref{alg:svd-adam_algorithm}) with Adam on the Poisson problem
\begin{equation}
\label{equ:laplacian_problem}
\begin{cases}
-\Delta u = -2d, & x \in \Omega,\\
u = 1, & x \in \partial\Omega,
\end{cases}
\end{equation}
posed on the $d$-dimensional unit ball
$\Omega=\{x\in\mathbb{R}^d:\|x\|_2\leq1\}$.
The exact solution is given by $u^*(x)=\|x\|_2^2$.
We approximate the solution using a neural network $u(\mathbf{x};\theta)$ trained to minimize a weighted sum of interior and boundary residuals,
\begin{equation}
\mathcal{L}(\theta)
=
\frac{1}{N_{\mathrm{pde}}}\sum_{i=1}^{N_{\mathrm{pde}}}
\bigl(\Delta u(\mathbf{x}_i;\theta) - 2d \bigr)^2
+
\lambda_{\mathrm{bc}}\,
\frac{1}{N_{\mathrm{bc}}}\sum_{b=1}^{N_{\mathrm{bc}}}
\bigl(u(\mathbf{x}_b;\theta) - 1\bigr)^2.
\end{equation}

The network consists of $4$ hidden layers of width $16$ with $\mathrm{ReLU}^3$ activation functions, initialized using Xavier normal initialization with zero biases. At each training epoch, mini-batches of $N_{\mathrm{pde}}=256$ interior points and $N_{\mathrm{bc}}=256$ boundary points are sampled uniformly from $\Omega$ and $\partial\Omega$, respectively. A fixed test set of size $2^{15}$ is used to evaluate the relative $L^2$ error. All experiments are run for $10{,}000$ epochs and repeated $10$ times with different random seeds for dimensions $d\in\{2,4,6,8\}$.

Both optimizers use Adam hyperparameters $\beta_1=0.9$, $\beta_2=0.999$, and $\epsilon=10^{-8}$, with an initial learning rate of $3\times10^{-3}$. A staircase decay schedule is employed: for Adam, the learning rate is halved every $1000$ epochs, while for SPGD it is multiplied by $0.9$ every $100$ epochs, with a minimum enforced learning rate of $3\times10^{-6}$. The boundary penalty parameter is fixed at $\lambda_{\mathrm{bc}}=1000$.

\cref{fig:loss_d4_d6} presents representative convergence histories for dimensions $d=4$ and $d=6$, showing the median relative $L^2$ error and interquartile range across $10$ runs (smoothed using a rolling mean). \cref{tab:loss_pde,tab:milestone_pde} report final relative $L^2$ error statistics and milestone epochs required to reach prescribed accuracy thresholds for all tested dimensions.

The results demonstrate that SPGD consistently outperforms Adam as the spatial dimension increases. In particular, SPGD converges substantially faster: the median number of epochs required to reach error thresholds such as $10^{-1}$ or $10^{-2}$ is reduced by a factor of $3$--$5$ across all dimensions, enabling robust convergence in higher-dimensional settings where Adam exhibits significant slowdowns. Moreover, SPGD achieves lower final accuracy: the median relative $L^2$ errors are approximately one order of magnitude smaller than those obtained by Adam across all tested dimensions. These results confirm the stability and efficiency of the proposed spectral preconditioning for PDE-constrained optimization in high-dimensional settings.

\begin{figure}[t]
    \centering
    \includegraphics[width=\linewidth]{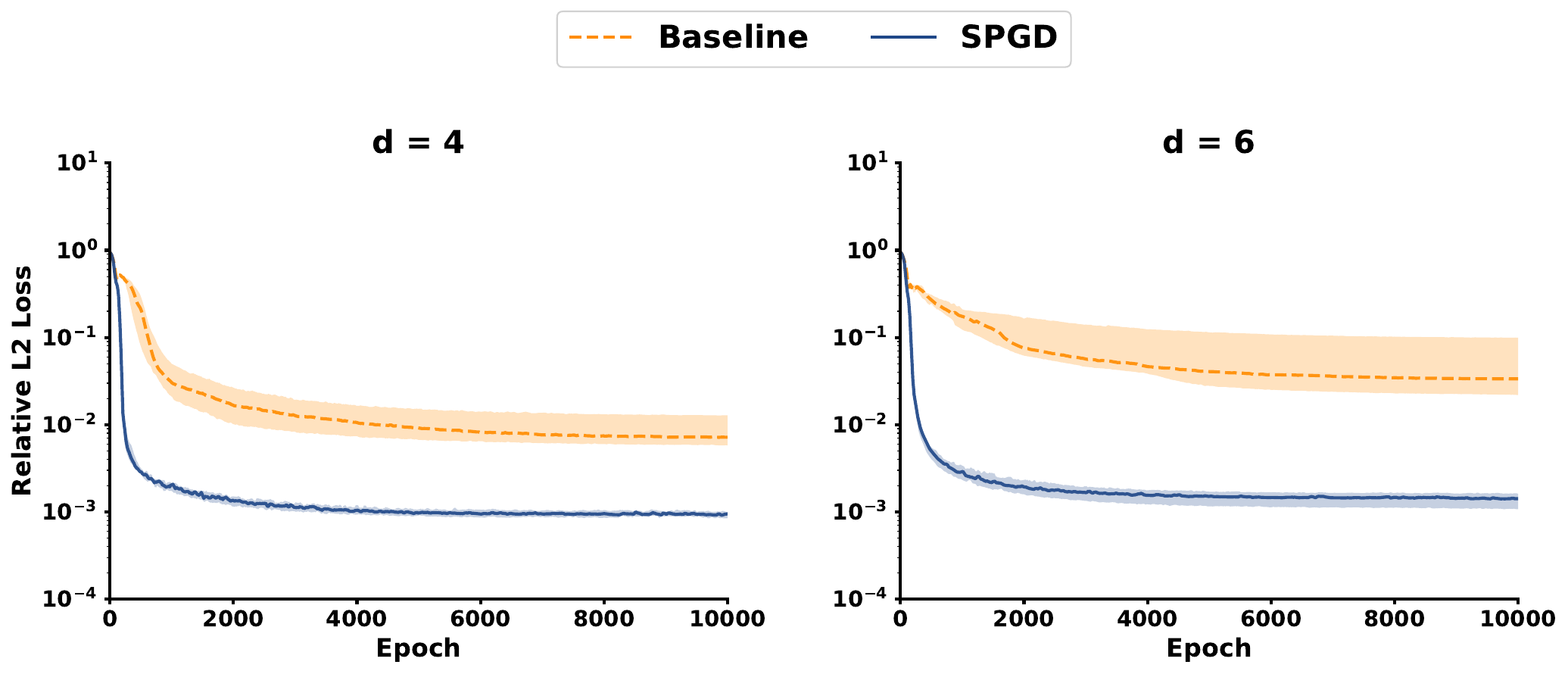}
    \caption{Relative $L^2$ loss versus the number of epochs for the Poisson equation in dimensions $d\in\{4,6\}$. The shaded region in each subplot indicates the interquartile range over 10 independent runs with different random seeds.}
    \label{fig:loss_d4_d6}
\end{figure}

\begin{table}[t]
\centering\small
\caption{Relative $L^2$ loss statistics over $10$ independent runs.}
\label{tab:loss_pde}
\small
\begin{tabular}{c l c c c}
\toprule
$d$ & Optimizer & Median & Best & Worst \\
\midrule
2 & Adam
& $4.31\times10^{-3}$
& $1.89\times10^{-3}$
& $9.28\times10^{-2}$ \\
2 & SPGD
& $\mathbf{9.35\times10^{-4}}$
& $\mathbf{5.01\times10^{-4}}$
& $\mathbf{7.77\times10^{-3}}$ \\
\addlinespace
4 & Adam
& $7.27\times10^{-3}$
& $4.37\times10^{-3}$
& $2.84\times10^{-2}$ \\
4 & SPGD
& $\mathbf{9.30\times10^{-4}}$
& $\mathbf{8.03\times10^{-4}}$
& $\mathbf{1.22\times10^{-3}}$ \\
\addlinespace
6 & Adam
& $3.33\times10^{-2}$
& $7.24\times10^{-3}$
& $1.72\times10^{-1}$ \\
6 & SPGD
& $\mathbf{1.42\times10^{-3}}$
& $\mathbf{8.30\times10^{-4}}$
& $\mathbf{1.76\times10^{-3}}$ \\
\addlinespace
8 & Adam
& $1.12\times10^{-1}$
& $1.67\times10^{-2}$
& $1.62\times10^{-1}$ \\
8 & SPGD
& $\mathbf{3.04\times10^{-3}}$
& $\mathbf{2.38\times10^{-3}}$
& $\mathbf{4.09\times10^{-3}}$ \\
\bottomrule
\end{tabular}
\end{table}

\begin{table}[t]
\centering\small
\caption{Milestone epoch statistics over $10$ independent runs.}
\label{tab:milestone_pde}
\small
\setlength{\tabcolsep}{4pt}
\begin{tabular}{c l c c c c c c }
\toprule
 &  & \multicolumn{3}{c}{Threshold $10^{-1}$}
 & \multicolumn{3}{c}{Threshold $10^{-2}$}
  \\
\cmidrule(lr){3-5}\cmidrule(lr){6-8}
$d$ & Optimizer 
& Median & Best & Worst
& Median & Best & Worst
 \\
\midrule
2 & Adam
& 788  & 460  & 6309
& 1585 & 1078 & 3830
 \\
2 & SPGD
& \textbf{233} & \textbf{156} & \textbf{402}
& \textbf{291} & \textbf{214} & \textbf{614}
 \\
\addlinespace
4 & Adam
& 612  & 366  & 1552
& 2568 & 1585 & 4542
 \\
4 & SPGD
& \textbf{170} & \textbf{146} & \textbf{214}
& \textbf{221} & \textbf{191} & \textbf{326}
 \\
\addlinespace
6 & Adam
& 1501 & 471  & 4948
& 3867 & 3867 & 3867
 \\
6 & SPGD
& \textbf{165} & \textbf{127} & \textbf{193}
& \textbf{300} & \textbf{256} & \textbf{473}
 \\
\addlinespace
8 & Adam
& 2172 & 894  & 5817
& --   & --   & --
 \\
8 & SPGD
& \textbf{158} & \textbf{149} & \textbf{198}
& 485 & 418 & 689
 \\
\bottomrule
\end{tabular}
\end{table}

\subsection{Classification on CIFAR-10}
\label{subsec:public_dataset}
We compare the proposed SPGD with the standard Adam optimizer on the CIFAR-10 dataset using three neural network architectures of increasing complexity. The corresponding model specifications and selected hyperparameters are summarized in~\cref{tab:cifar10_models}. All experiments employ a batch size of $128$, a fixed learning rate of $10^{-2}$, and standard Adam parameters $\beta_1=0.9$, $\beta_2=0.999$, and $\epsilon=10^{-8}$ for both methods. Training is performed for $50$ epochs.

Standard data augmentation is applied, including random horizontal flipping with probability $0.5$ and random cropping with padding size $4$, together with standard CIFAR-10 normalization using mean $[0.491,\,0.482,\,0.447]$ and standard deviation $[0.247,\,0.243,\,0.262]$. 

The configurations reported in~\cref{tab:cifar10_models} illustrate the dependence of the proposed method on network scale. In particular, smaller networks favor weaker damping parameters ($\delta=10^{-5}$) and fewer Lanczos iterations ($k=10$), whereas larger networks require stronger regularization ($\delta=10^{-4}$) to ensure numerical stability of the spectral preconditioner. This behavior is consistent with the increasing ill-conditioning of the Jacobian in deeper and wider architectures.

All experiments are implemented in JAX with deterministic execution enabled to ensure reproducibility.

\begin{table}[t]
\centering
\caption{Network architectures and SPGD hyperparameters for CIFAR-10 experiments.}
\label{tab:cifar10_models}
\small
\begin{tabular}{llc}
\toprule
\textbf{Model} & \textbf{Architecture} & \textbf{SPGD $(\delta, k)$} \\
\midrule
SimpleCNN-8k   & 2 Conv (4, 8) + 2 FC (16, 10)        & $(10^{-5}, 10)$ \\
LeNet-62k      & 2 Conv (6, 16) + 3 FC (120, 84, 10) & $(10^{-5}, 20)$ \\
ResNet20-272k  & 20-layer ResNet + GroupNorm & $(10^{-4}, 10)$ \\
\bottomrule
\end{tabular}
\end{table}

\cref{fig:cifar10_combined} summarizes the comparison. Across the three network architectures (SimpleCNN-8k, LeNet-62k, and ResNet20-272k), SPGD consistently outperforms Adam. The advantage manifests in two respects. First, SPGD converges noticeably faster than Adam: by exploiting curvature information through the preconditioner $J_F^{\top} C J_F$, the method reduces the number of epochs required to reach a given accuracy, as reflected in the training loss and test accuracy curves. Second, SPGD achieves higher final test accuracy than Adam under the same training budget, indicating that the preconditioned updates not only accelerate convergence but also lead to better solutions. These results show that the benefit of SPGD over the first-order baseline holds across architectures of varying capacity and is reflected in both convergence speed and final performance.

\begin{figure}[htbp]
    \centering
    \includegraphics[width=\linewidth]{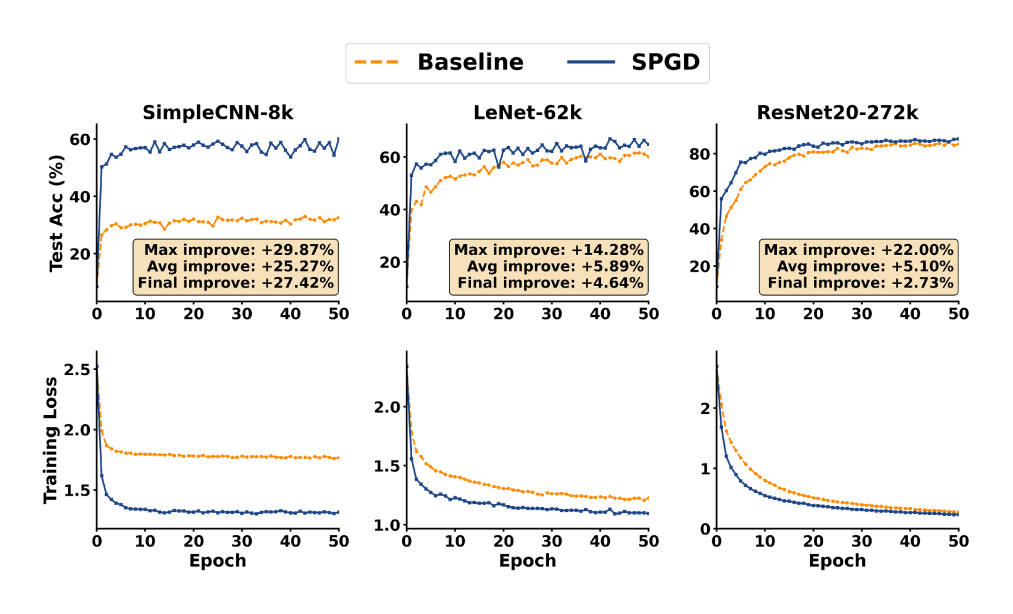}
    \caption{Comparison of Adam (Baseline) and SPGD on three network architectures for CIFAR-10 classification. Top row: test accuracy versus epoch. Bottom row: training loss versus epoch. For each network, improvement statistics (Max/Avg/Final) quantify SPGD's advantage over the baseline. SPGD consistently achieves faster convergence and higher accuracy, with the most pronounced improvements observed in smaller networks.}
    \label{fig:cifar10_combined}
\end{figure}

\section{Conclusions}
\label{sec:conclusions}
This work is motivated by the observation that, in many scientific computing problems formulated as nonlinear least-squares systems, the physical flowâ€”namely, the natural dynamical evolution induced by the governing equationsâ€”often converges to a steady state significantly faster than the standard gradient flow. While gradient flow guarantees monotonic decrease of the residual and global convergence under mild conditions, its convergence can be slow near stationary points due to ill-conditioning of the associated Fisher information or Gauss--Newton matrix. In contrast, physical flow better reflects the intrinsic dynamics of the problem and can exhibit rapid convergence, but it typically lacks robustness and global convergence guarantees.

To reconcile these two perspectives, we introduce an optimization framework that preconditions the gradient descent direction using spectral information from the SVD of the Jacobian. This construction effectively transforms the gradient flow into a \emph{physical-like flow} that preserves the stability and convergence guarantees of gradient-based methods while substantially accelerating convergence. The resulting SVD-preconditioned descent direction is further integrated into the adaptive learning-rate mechanism of Adam, yielding a method that combines problem-dependent geometric information with the robustness of adaptive first-order optimization.

From a theoretical standpoint, we establish local linear convergence results for both classical GD and the proposed SPGD method in the context of nonlinear least-squares problems. In particular, we show that the convergence rate of the preconditioned method is governed by the condition number $\sigma_{\max}/\sigma_{\min}$ of the Jacobian, rather than its square $(\sigma_{\max}/\sigma_{\min})^2$ as in standard gradient descent, leading to a substantial improvement in ill-conditioned regimes. For a modified version of the algorithm, we further provide a global convergence analysis under standard regularity assumptions.

The effectiveness of the proposed approach is demonstrated through numerical experiments on function approximation, PDE-constrained optimization, and image classification on CIFAR-10. Across all examples, replacing the standard gradient direction with the SVD-preconditioned (physical-like) direction consistently accelerates convergence and improves final optimization accuracy, particularly for problems characterized by high intrinsic dimensionality or severely ill-conditioned Jacobians. The CIFAR-10 results further indicate that the method remains robust across varying network scales, with especially pronounced benefits for small to moderately sized models.

Several directions for future research naturally follow from this work. On the theoretical side, it is of interest to further characterize the optimal choice of the preconditioning exponent and to extend the analysis to broader classes of dynamical systems and loss geometries. On the computational side, future efforts will focus on scalable implementations using randomized SVD or Lanczos-type methods, as well as applications to time-dependent, inverse, and data-driven problems in scientific machine learning.

\section*{Acknowledgments}
This research supported by National Institute of General Medical Sciences through grant 1R35GM146894 (ZC and WH).

\bibliographystyle{abbrv}
\bibliography{references}

@article{zhou-2024-convergence_of_adagrad,
  title={On the Convergence of Adaptive Gradient Methods for Nonconvex Optimization},
  author={Zhou, Dongruo and Chen, Jinghui and Cao, Yuan and Yang, Ziyan and Gu, Quanquan},
  journal={Transactions on Machine Learning Research},
  issn={2835-8856},
  year={2024},
}

@article{chang2026fisher,
  title={Fisher-Informed Parameterwise Aggregation for Federated Learning with Heterogeneous Data},
  author={Chang, Zhipeng and He, Ting and Hao, Wenrui},
  journal={arXiv preprint arXiv:2601.13608},
  year={2026}
}

@article{park2021accelerated,
  title={Accelerated additive Schwarz methods for convex optimization with adaptive restart},
  author={Park, Jongho},
  journal={Journal of Scientific Computing},
  volume={89},
  number={3},
  pages={58},
  year={2021},
  publisher={Springer}
}

@article{park2020additive,
  title={Additive Schwarz methods for convex optimization as gradient methods},
  author={Park, Jongho},
  journal={SIAM Journal on Numerical Analysis},
  volume={58},
  number={3},
  pages={1495--1530},
  year={2020},
  publisher={SIAM}
}

@inproceedings{muller2023achieving,
  title={Achieving high accuracy with PINNs via energy natural gradient descent},
  author={M{\"u}ller, Johannes and Zeinhofer, Marius},
  booktitle={International Conference on Machine Learning},
  pages={25471--25485},
  year={2023},
  organization={PMLR}
}

@article{hao2025efficient,
  title={An Efficient Quasi-Newton Method with Tensor Product Implementation for Solving Quasi-Linear Elliptic Equations and Systems},
  author={Hao, Wenrui and Lee, Sun and Zhang, Xiangxiong},
  journal={Journal of Scientific Computing},
  volume={103},
  number={3},
  pages={89},
  year={2025},
  publisher={Springer}
}

@article{hao2024gauss,
  title={Gauss Newton method for solving variational problems of PDEs with neural network discretizations},
  author={Hao, Wenrui and Hong, Qingguo and Jin, Xianlin},
  journal={Journal of Scientific Computing},
  volume={100},
  number={1},
  pages={17},
  year={2024},
  publisher={Springer}
}

@article{hao2021gradient,
  title={A gradient descent method for solving a system of nonlinear equations},
  author={Hao, Wenrui},
  journal={Applied Mathematics Letters},
  volume={112},
  pages={106739},
  year={2021},
  publisher={Elsevier}
}

@article{ruder2016overview,
  title={An overview of gradient descent optimization algorithms},
  author={Ruder, Sebastian},
  journal={arXiv preprint arXiv:1609.04747},
  year={2016}
}

@misc{yang-2016-unified_convergence_analysis_stochastic,
  title={Unified Convergence Analysis of Stochastic Momentum Methods for Convex and Non-convex Optimization},
  author={Yang, Tianbao and Lin, Qihang and Li, Zhe},
  year={2016},
  eprint={1604.03257},
  archivePrefix={arXiv},
  primaryClass={math.OC},
}

@article{chen2022randomized,
  title={Randomized Newton’s method for solving differential equations based on the neural network discretization},
  author={Chen, Qipin and Hao, Wenrui},
  journal={Journal of Scientific Computing},
  volume={92},
  number={2},
  pages={49},
  year={2022},
  publisher={Springer}
}

@article{siegel2021greedy,
  title={Greedy training algorithms for neural networks and applications to PDEs},
  author={Siegel, Jonathan W and Hong, Qingguo and Jin, Xianlin and Hao, Wenrui and Xu, Jinchao},
  journal={arXiv preprint arXiv:2107.04466},
  year={2021}
}

@inproceedings{reddi-2018-AMSGrad,
  title={On the Convergence of Adam and Beyond},
  author={Reddi, Sashank J and Kale, Satyen and Kumar, Sanjiv},
  booktitle={International Conference on Learning Representations},
  year={2018}
}

@misc{kingma-2017-adam,
  title={Adam: A Method for Stochastic Optimization},
  author={Kingma, Diederik P. and Ba, Jimmy},
  year={2017},
  eprint={1412.6980},
  archivePrefix={arXiv},
  primaryClass={cs.LG},
}

@article{duchi-2011-adagrad,
  title={Adaptive subgradient methods for online learning and stochastic optimization},
  author={Duchi, John and Hazan, Elad and Singer, Yoram},
  journal={Journal of Machine Learning Research},
  volume={12},
  number={7},
  year={2011}
}

@inproceedings{martens-2015-KFAC,
  title={Optimizing neural networks with kronecker-factored approximate curvature},
  author={Martens, James and Grosse, Roger},
  booktitle={International conference on machine learning},
  pages={2408--2417},
  year={2015},
  organization={PMLR}
}

@inproceedings{gupta-2018-shampoo,
  title={Shampoo: Preconditioned stochastic tensor optimization},
  author={Gupta, Vineet and Koren, Tomer and Singer, Yoram},
  booktitle={International Conference on Machine Learning},
  pages={1842--1850},
  year={2018},
  organization={PMLR}
}

@article{liu-1989-LBFGS,
  title={On the limited memory BFGS method for large scale optimization},
  author={Liu, Dong C and Nocedal, Jorge},
  journal={Mathematical Programming},
  volume={45},
  number={1},
  pages={503--528},
  year={1989},
  publisher={Springer}
}

@book{nocedal-2006-numerical_optimization,
  title={Numerical Optimization},
  author={Nocedal, Jorge and Wright, Stephen J},
  year={2006},
  publisher={Springer}
}

@article{amari-1998-natural_gradient,
  title={Natural gradient works efficiently in learning},
  author={Amari, Shun-Ichi},
  journal={Neural Computation},
  volume={10},
  number={2},
  pages={251--276},
  year={1998},
  publisher={MIT Press}
}

@article{polyak-1963-gradient,
title = {Gradient methods for the minimisation of functionals},
journal = {USSR Computational Mathematics and Mathematical Physics},
volume = {3},
number = {4},
pages = {864--878},
year = {1963},
issn = {0041-5553},
doi = {https://doi.org/10.1016/0041-5553(63)90382-3},
url = {https://www.sciencedirect.com/science/article/pii/0041555363903823},
author = {Polyak, B.T.}
}

@misc{lojasiewicz-1963-topological,
 author = {{\L}ojasiewicz, Stanis{\l}aw},
 title = {A topological property of real analytic subsets},
 year = {1963},
 language = {French},
 howpublished = {Equ. {Derivees} partielles, {Paris} 1962, {Colloques} internat. {Centre} nat. {Rech}. sci. 117, 87-89 (1963).},
 zbMATH = {3371284},
 Zbl = {0234.57007}
}

@article{xu-2019-frequency-principle,
  title={Frequency principle: Fourier analysis sheds light on deep neural networks},
  author={Xu, Zhi-Qin John and Zhang, Yaoyu and Luo, Tao and Xiao, Yanyang and Ma, Zheng},
  journal={arXiv preprint arXiv:1901.06523},
  year={2019}
}

@Inbook{Lee-2012,
author="Lee, John M.",
title="Submanifolds",
bookTitle="Introduction to Smooth Manifolds",
year="2012",
publisher="Springer New York",
address="New York, NY",
pages="98--124",
isbn="978-1-4419-9982-5",
doi="10.1007/978-1-4419-9982-5_5",
url="https://doi.org/10.1007/978-1-4419-9982-5_5"
}

@article{feng-2022-rank_diminishinig,
  title={Rank diminishing in deep neural networks},
  author={Feng, Ruili and Zheng, Kecheng and Huang, Yukun and Zhao, Deli and Jordan, Michael and Zha, Zheng-Jun},
  journal={Advances in Neural Information Processing Systems},
  volume={35},
  pages={33054--33065},
  year={2022}
}

@InProceedings{karakida-2019-universal_statistics_of_FIM,
  title = 	 {Universal Statistics of Fisher Information in Deep Neural Networks: Mean Field Approach},
  author =       {Karakida, Ryo and Akaho, Shotaro and Amari, Shun-ichi},
  booktitle = 	 {Proceedings of the Twenty-Second International Conference on Artificial Intelligence and Statistics},
  pages = 	 {1032--1041},
  year = 	 {2019},
  editor = 	 {Chaudhuri, Kamalika and Sugiyama, Masashi},
  volume = 	 {89},
  series = 	 {Proceedings of Machine Learning Research},
  month = 	 {16--18 Apr},
  publisher =    {PMLR},
  url = 	 {https://proceedings.mlr.press/v89/karakida19a.html}
}

@book{grossmann-2007-numerical_treatment,
  title={Numerical Treatment of Partial Differential Equations: Translated and revised by Martin Stynes},
  author={Grossmann, Christian and Roos, Hans-G{\"o}rg and Stynes, Martin},
  publisher={Springer},
  year={2007}
}

@book{golub-2013-matrix_computations,
  title={Matrix Computations},
  author={Golub, Gene H. and Van Loan, Charles F.},
  edition={4},
  year={2013},
  publisher={Johns Hopkins University Press}
}

\end{document}


\begin{center}
{\LARGE\sc SVD-Preconditioned Gradient Descent Method for Solving Nonlinear Least Squares Problems}\\[8pt]
{\large Supplementary Material}\\[12pt]
{\normalsize Zhipeng Chang\quad Wenrui Hao$^*$\quad Nian Liu}\\[4pt]
{\small Department of Mathematics, Penn State University, University Park, PA 16802, USA}
\end{center}
\vspace{0.3in}

\appendix
\section{Lemmas for Section 2} 
\label{sec:proof of lemmas}

\begin{lemma}\label{lem:lower bound for Hessian along normal direction}
Let $\theta_0$ be any point in $\mathcal{M}$. Then there exists a neighborhood $V \subset U$ of $\theta_0$, together with a projection map $\pi: V \to \mathcal{M}$, such that: 
\begin{itemize}
    \item For any constant $\delta > 0$, the entire line segment connecting $\theta$ and $\pi(\theta)$,
    \begin{equation*}
        [\pi(\theta), \theta] := \{ (1-t)\pi(\theta) + t\theta : t \in [0,1]\},
    \end{equation*}
    lies within $V$ for all $\theta \in V$ and satisfies $\|\theta - \pi(\theta)\|_2 < \delta$.
    
    \item The image $\pi(V)$ is an open neighborhood of $\theta_0$ in $\mathcal{M}$, which can be chosen arbitrarily small around $\theta_0$.
    
    \item For every $\theta \in V$, the Hessian $\nabla^2 f(\theta)$ is uniformly positive definite on the normal space $N_{\pi(\theta)}\mathcal{M}$. More precisely, let $\sigma_{\min}(\theta_0)$ denote the smallest nonzero singular value of the Jacobian $J_F(\theta_0)$. Then there exists a constant $\mu = \sigma_{\min}^2(\theta_0)/4 > 0$ such that
    \begin{equation*}
        \langle \nabla^2 f(\theta)\xi, \xi \rangle \ge 2\mu \|\xi\|_2^2, 
        \qquad \forall \xi \in N_{\pi(\theta)}\mathcal{M}, \quad\forall \theta \in V.
    \end{equation*}
    Equivalently, the restriction of $\nabla^2 f(\theta)$ to the normal space $N_{\pi(\theta)}\mathcal{M}$ has a uniformly positive lower bound on its spectrum for all $\theta\in V$.
\end{itemize}
\end{lemma}

\begin{proof}[Proof]
We pick a precompact neighborhood $S$ of $\theta_0$ in $\mathcal{M}$. 
For any $p \in S$, consider the normal exponential map
\begin{equation}\label{equ:exponential on normal space}
        \begin{split}
            \exp^{\perp}_p: U_p\times N_{p}\mathcal{M}&\longrightarrow\mathbb{R}^m,
            \\ (q,v)&\mapsto q+v,
        \end{split}
    \end{equation}
where $U_p \subset \mathcal{M}$ is an open neighborhood of $p$ in $\mathcal{M}$. 
At the point $(p,0)$, the differential of~\cref{equ:exponential on normal space} given by
\begin{equation*}
    D\exp_p^{\perp}(p,0)(\xi,\eta) = \xi + \eta,
    \qquad\forall \xi \in T_p\mathcal{M},\quad\forall\eta \in N_p\mathcal{M},
\end{equation*} is a linear isomorphism. Therefore, the inverse function theorem ensures that, after possibly shrinking $U_p$, there exists a radius $r_p>0$ such that
\begin{equation}\label{equ:exponential is local diffeomorphism}
    \exp_p^{\perp} : U_p \times W_p \to \exp_p^{\perp}(U_p\times W_p)
\end{equation}
is a diffeomorphism onto its image, where $W_p:=\{v\in N_p\mathcal{M}:\|v\|_2<r_p\}.$

Let $K := \overline{S}$ be the closure of $S$ in $\mathcal{M}$. 
Since $S$ is precompact, $K$ is compact. 
For each $p \in K$, choose the corresponding neighborhoods $U_p$ and $W_p$ in~\cref{equ:exponential is local diffeomorphism}. The family $\{U_p\}_{p \in K}$ forms an open cover of $K$. The Lebesgue number of $\{U_p\}_{p\in K}$ is denoted by $\delta_{K}$. By compactness, there exist finitely many points $p_1,\dots,p_k \in K$ 
such that $\{U_{p_i}\}_{i=1}^k$ covers $K$. 
Define
\begin{equation}\label{equ:definition of radius}
    r := \min\left\{ \min_{1 \le i \le k}r_{p_i},\delta,\frac{\delta_K}{2} \right\} > 0.
\end{equation}

We define
\begin{equation*}
    N_S^r\mathcal{M}=\{(p,v):p\in S, v\in N_p\mathcal{M},\|v\|_2<r\}.
\end{equation*}
We now prove that the map
\begin{equation}\label{equ:definition of exp perp on S}
    \begin{split}
        \exp^{\perp}:S\times N_S^r\mathcal{M}&\longrightarrow\mathbb{R}^m\\ (p, v)&\mapsto\exp_p^{\perp}(v)
    \end{split}
\end{equation} is injective. The map $\exp^{\perp}$ is clearly well defined. Suppose there exist two pairs $(p_i,v_i)$ and $(p_j,v_j)$, with $p_i,p_j\in S, v_i\in {N}_{p_i}\mathcal{M}, v_j\in N_{p_j}\mathcal{M}$ such that $$p_i+v_i=p_j+v_j.$$ Then, this implies
\begin{equation*}
    p_i-p_j=v_j-v_i,
\end{equation*}and hence
\begin{equation*}
    \|p_i-p_j\|_2=\|v_i-v_j\|_2\leq 2r\leq \delta_K.
\end{equation*}By the definition of the Lebesgue number $\delta_K$, both $p_i$ and $p_j$ must lie in some common neighborhood $U_{p^*}$. Since the restriction $\exp^{\perp}|_{U_{p^*}}$ is a diffeomorphism onto its image, the equality $$\exp^{\perp}|_{U_{p^*}}(p_i,v_i)=\exp^{\perp}|_{U_{p^*}}(p_j,v_j)$$ yields $p_i=p_j, v_i=v_j$. Therefore, $\exp^{\perp}$ is locally a diffeomorphism and globally injective; in other words, $\exp^{\perp}$ is a diffeomorphism onto its image $\mathcal{T}_S=\exp^{\perp}(N_S^r\mathcal{M})$. We claim that $\mathcal{T}_S$ is exactly the neighborhood $V$ that we are looking for.

We define the projection $\pi:\mathcal{T}_S\to\mathcal{M}$ as $\pi:=\pi_S\circ(\exp^{\perp})^{-1}$. Combining~\cref{equ:definition of radius} and~\cref{equ:definition of exp perp on S} yields that, for every $\theta \in \mathcal{T}_S$,
\[
    \theta - \pi(\theta) \in N_{\pi(\theta)}\mathcal{M},
    \qquad
    \|\theta - \pi(\theta)\|_2 < r\leq\delta.
\]
Moreover, for any $t\in[0,1],$ we have
\begin{equation*}
    (1-t)\pi(\theta)+t\cdot\theta=\pi(\theta)+t(\theta-\pi(\theta))=\exp_{\pi(\theta)}^{\perp}(t(\theta-\pi(\theta))).
\end{equation*}
In other words, the entire line segment $[\pi(\theta),\theta]$ lies within $\mathcal{T}_S.$ Besides, notice that the neighborhood $S=\pi(\mathcal{T}_S)$ is a prescribed neighborhood; it can be chosen arbitrarily small around $\theta_0$.

Finally, shrinking $r$ if necessary, and by the uniform continuity of $\nabla^2 f(\theta)$ on $\mathcal{T}_S$, for any $\theta\in\mathcal{T}_S$, we have
\begin{equation}\label{equ:closeness}
    \|\nabla^2f(\theta)-\nabla^2f(\pi(\theta))\|_2\leq\frac{\sigma^2_{\min}(\theta_0)}{4}.
\end{equation}
Moreover, notice that
\begin{equation*}\label{equ:the qudratic form on manifold}
    \begin{split}
    \Phi(\cdot,\cdot):S\mathcal{M}&\longrightarrow\mathbb{R},
        \\ 
        (p,\xi)&\mapsto\langle\nabla^2f(p)\xi,\xi\rangle
    \end{split}
\end{equation*}
is continuous, where the unit normal bundle over $\mathcal{M}$ is defined as
\begin{equation*}
    S\mathcal{M}:=\{(p,\xi):p\in\mathcal{M},\xi\in N_p\mathcal{M},\|\xi\|_2=1\}.
\end{equation*}
Since 
\begin{equation*}
\Phi(\theta_0,\xi)=\langle\nabla^2f(\theta_0)\xi,\xi\rangle\geq \sigma_{\min}^2(\theta_0),\qquad\forall\xi\in N_{\theta_0}\mathcal{M}.
\end{equation*}Shrinking $S$ around $\theta_0$ if necessary, by the continuity of~\cref{equ:the qudratic form on manifold}, we may assume that
\begin{equation*}
    \Phi(p,\xi)\geq\frac{3}{4}\sigma_{\min}^2(\theta_0),\qquad\forall p\in S,\,\forall\xi\in N_{p}\mathcal{M}.
\end{equation*}
This implies that
\begin{equation}\label{equ:lower bound around theta_0}
\langle\nabla^2f(p)\xi,\xi\rangle\geq\frac{3}{4}\sigma_{\min}^2(\theta_0)\|\xi\|^2_2,\qquad\forall p\in S,\,\forall\xi\in N_p\mathcal{M}.
\end{equation}
Let $\mu := \sigma_{\min}^2(\theta_0)/4 > 0$. Using \eqref{equ:closeness} and \eqref{equ:lower bound around theta_0}, for any $\theta\in\mathcal{T}_S$ and any $\xi\in N_{\pi(\theta)}\mathcal M$,
\begin{equation*}
    \begin{split}
\langle\nabla^2f(\theta)\xi,\xi\rangle
&= \langle(\nabla^2f(\theta)-\nabla^2f(\pi(\theta)))\xi,\xi\rangle
  + \langle\nabla^2f(\pi(\theta))\xi,\xi\rangle
\\
&\geq -\|\nabla^2f(\theta)-\nabla^2f(\pi(\theta))\|_2\|\xi\|_2^2
    + \frac{3}{4}\sigma_{\min}^2(\theta_0)\|\xi\|_2^2
\\
&\geq 2\mu \|\xi\|_2^2.
    \end{split}
\end{equation*}
That is, the restriction of $\nabla^2 f(\theta)$ to $N_{\pi(\theta)}\mathcal{M}$ 
admits the uniform spectral lower bound $2\mu$ on $V:=\mathcal{T}_S$.

\end{proof}

\begin{lemma}\label{lem:PL inequality for the objective function}
Let $V \subset U$ and the projection $\pi:V\to\mathcal{M}$ be as given in~\cref{lem:lower bound for Hessian along normal direction}. 
Then there exists a constant $\mu = c_0/2 > 0$ such that
\begin{equation*}
    \|\nabla f(\theta)\|_2^2 \geq 2\mu(f(\theta)-f(\pi(\theta))),
    \qquad \forall\,\theta\in V.
\end{equation*}
In particular, since $f(\pi(\theta))=0$ for $\theta\in\mathcal{M}$, the inequality reduces to
\begin{equation*}
    \|\nabla f(\theta)\|_2^2 \geq 2\mu f(\theta),
    \qquad \forall\theta\in V.
\end{equation*}
\end{lemma}

The above inequality is known as the Polyak--\L{}ojasiewicz (PL) inequality (see~\cite{polyak-1963-gradient} for the case where the minimal function value equals zero, and~\cite{lojasiewicz-1963-topological}), which plays a central role in establishing the linear convergence of the gradient descent method for $\mu$-strongly convex functions. However, the proof of~\cref{lem:PL inequality for the objective function} is nontrivial, since our objective function is not $\mu$-strongly convex. In particular, observe that
\begin{equation*}
    \nabla^2 f(\theta)
    = J_F^{\top}(\theta) J_F(\theta)
    + \sum_{i=1}^n F_i(\theta) \nabla^2 F_i(\theta).
\end{equation*}
Even at $\theta = \theta^*$, the Hessian simplifies to
\[
    \nabla^2 f(\theta^*) = J_F^{\top}(\theta^*) J_F(\theta^*),
\]
which possesses zero eigenvalues whenever $r < m$. Consequently, there is no guarantee that the smallest eigenvalue of $\nabla^2 f(\theta)$ admits a uniform positive lower bound. In other words, the objective function $f$ does not satisfy the $\mu$-strong convexity condition.

\begin{proof}[Proof]
By~\cref{lem:lower bound for Hessian along normal direction}, there exists an open set 
$V \subset U$ and a projection $\pi:V\to \mathcal{M}$ such that $\pi(\theta)=\theta$ for all 
$\theta\in\mathcal{M}$, and every $\theta\in V$ can be uniquely written as
\begin{equation*}
    \theta = \pi(\theta) + n(\theta),
\end{equation*}
where $n(\theta)\in N_{\pi(\theta)}\mathcal{M}$. 
For each $\theta\in V$, define the straight line connecting $\pi(\theta)$ and $\theta$ by
\begin{equation*}
\begin{split}
    \gamma_\theta:[0,1]&\longrightarrow V,
    \\
    t&\mapsto \pi(\theta) + t\cdot n(\theta).
\end{split}
\end{equation*}
We define the one-dimensional restriction
\[
    \phi(t) := f(\gamma_\theta(t)), \qquad t\in[0,1].
\]
Then $\phi^{\prime\prime}(t)$ can be expressed as
\begin{equation}\label{eq:phi-second}
    \phi''(t) = \langle \nabla^2 f(\gamma_\theta(t))\,n(\theta),\,n(\theta) \rangle.
\end{equation}
By~\cref{lem:lower bound for Hessian along normal direction}, the restriction of $\nabla^2 f(\theta)$ to $N_{\pi(\theta)}\mathcal{M}$ 
admits the uniform spectral lower bound $2\mu$, that is,
\begin{equation}\label{eq:hessian-lb}
    \langle \nabla^2 f(\theta)\xi,\xi\rangle 
    \ge 2\mu\|\xi\|_2^2,
    \qquad \forall\theta\in V,\forall\xi\in N_{\pi(\theta)}\mathcal{M}.
\end{equation}
Hence 
\begin{equation}\label{equ:lower bound of the second derivative of phi}
    \phi''(t)\geq 2\mu\|n(\theta)\|_2^2,\quad \forall t\in[0,1].
\end{equation}

By the chain rule and the fundamental theorem of calculus,
\begin{equation}\label{equ:f difference}
    f(\theta)-f(\pi(\theta)) = \phi(1)-\phi(0) 
    = \int_0^1 \phi'(s)\,\mathrm{d}s.
\end{equation}
Since $\pi(\theta)\in\mathcal{M}$ is a minimizer of $f$, we have $\nabla f(\pi(\theta))=0$, 
and hence
\begin{equation}\label{equ:phi prime zero}
    \phi'(0) = \langle \nabla f(\pi(\theta)),\,n(\theta) \rangle = 0.
\end{equation}
Combining~\cref{equ:f difference} and~\cref{equ:phi prime zero} gives
\begin{equation}\label{equ:f diff expanded}
\begin{split}
     f(\theta)-f(\pi(\theta))
     &= \int_0^1 \phi'(s)\,\mathrm{d}s
     = \int_0^1 \left(\int_0^s \phi''(t)\mathrm{d}t\right)\mathrm{d}s
     \\[3pt]
     &= \int_0^1 (1-t)\phi''(t)\mathrm{d}t.
\end{split}
\end{equation}

Similarly, differentiating along $\gamma_\theta$ yields
\begin{equation}\label{equ:grad diff}
    \nabla f(\theta) 
    = \int_0^1 \nabla^2 f(\gamma_\theta(t))\,n(\theta)\,\mathrm{d}t.
\end{equation}
Applying the Cauchy-Schwarz inequality gives
\begin{equation}\label{equ:grad lower bound}
\begin{split}
    \|\nabla f(\theta)\|_2^2
    &\ge 
    \frac{\left(\displaystyle\int_0^1 
        \langle \nabla^2 f(\gamma_\theta(t))n(\theta),n(\theta) \rangle
        \mathrm{d}t
    \right)^2}
    {\|n(\theta)\|_2^2}
    \\[3pt]
    &= \frac{\left(\displaystyle\int_0^1 \phi''(t)\mathrm{d}t\right)^2}{\|n(\theta)\|_2^2}.
\end{split}
\end{equation}
Since $\phi''(t)\ge0$, combining \cref{equ:f diff expanded} and \cref{equ:grad lower bound} yields
\begin{equation*}
\begin{split}
    \|\nabla f(\theta)\|_2^2
    &\ge
    \frac{\left(\displaystyle\int_0^1 \phi''(t)\mathrm{d}t\right)
          \left(\displaystyle\int_0^1 (1-t)\phi''(t)\,\mathrm{d}t\right)}
          {\|n(\theta)\|_2^2}
    \\[3pt]
    &\ge 2\mu(f(\theta)-f(\pi(\theta))
    \\[3pt]
    &=2\mu f(\theta)
\end{split}
\end{equation*}
This completes the proof.
\end{proof}

\begin{lemma}\label{lem:F approx normal}
    Given any point $\theta_0\in\mathcal{M}$ and the projection map $\pi:V\to\mathcal{M}$ defined in~\cref{lem:lower bound for Hessian along normal direction}, there exists a neighborhood $W\subset V$, such that for any $\theta\in W$, we have
    \begin{equation*}
        \|F(\theta)\|_2\geq \frac{\sigma_{\min}(J_F(\theta_0))}{2}\|n(\theta)\|_2,
    \end{equation*}
    where $n(\theta):= \theta - \pi(\theta)\in N_{\pi(\theta)}\mathcal{M}$ and $\sigma_{\min}(J_F(\theta_0))$ denotes the smallest singular value of the Jacobian matrix $J_F(\theta_0).$
\end{lemma}

\begin{proof}[Proof]
    Since $F$ is $C^2$, there exists a precompact neighborhood $W \subset V$ of $\theta_0$ and a constant $C_F > 0$ such that for all $\theta \in w$, the Taylor expansion satisfies
    \begin{equation*}
        \|F(\theta) - F(\pi(\theta)) - J_F(\pi(\theta))(\theta-\pi(\theta))\|_2 \leq C_F \|\theta-\pi(\theta)\|_2^2.
    \end{equation*}
    Using the triangle inequality and the fact that $F(\pi(\theta))=0$, we have
    \begin{equation*}
        \|F(\theta)\|_2 \geq \|J_F(\pi(\theta))(\theta-\pi(\theta))\|_2 - C_F\|\theta-\pi(\theta)\|_2^2.
    \end{equation*}
    Note that $n(\theta) := \theta-\pi(\theta)$ lies in the normal space $N_{\pi(\theta)}\mathcal{M} = \mathrm{range}(J_{F}^{\top}(\pi(\theta)))$. Therefore,
    \begin{equation*}
        \|J_F(\pi(\theta))n(\theta)\|_2 \geq \sigma_{\min}(J_F(\pi(\theta)))\|n(\theta)\|_2.
    \end{equation*}
   By further restricting the neighborhood $W$, we ensure that for any $\theta \in W$:
\begin{itemize}
    \item the continuity of $\sigma_{\min}$ implies
    \[ \sigma_{\min}(J_F(\pi(\theta))) \geq \frac{3}{4}\sigma_{\min}(J_F(\theta_0)); \]
    \item the distance to the manifold satisfies
    \[ \|n(\theta)\|_2 \le \frac{\sigma_{\min}(J_F(\theta_0))}{4C_F}. \]
\end{itemize}
    Combining these estimates, for any $\theta \in W$, we obtain
    \begin{equation*}
    \begin{split}
         \|F(\theta)\|_2 &\geq \left(\frac{3}{4}\sigma_{\min}(J_F(\theta_0)) - \frac{1}{4}\sigma_{\min}(J_F(\theta_0))\right)\|n(\theta)\|_2
         \\
         &= \frac{\sigma_{\min}(J_F(\theta_0))}{2}\|n(\theta)\|_2,
    \end{split}
    \end{equation*}
    which completes the proof.
\end{proof}

\begin{lemma}\label{lem:null_space_control}
Denote by $\mathcal{R}(\theta) = \mathrm{range}(J_F(\theta))$ the column space of the Jacobian matrix $J_F(\theta)$, and $P_{\mathcal{R}(\theta)^\perp}$ be the orthogonal projection onto its orthogonal complement. Given any point $\theta_0\in \mathcal{M}$ and the projection map $\pi:W\to\mathcal{M}$ defined in~\cref{lem:F approx normal}, there exists a constant $C_{\perp} > 0$ such that for any $\theta \in W$,
\begin{equation}
    \|P_{\mathcal{R}(\theta)^\perp} F(\theta)\|_2 \le C_{\perp} \|F(\theta)\|_2^2.
\end{equation}
\end{lemma}

\begin{proof}[Proof]
   We expand $F(\pi(\theta))$ around the current point $\theta$ using Taylor's theorem:
    \begin{equation*}
        0 = F(\pi(\theta)) = F(\theta) + J_F(\theta)(\pi(\theta) - \theta) + \mathcal{Q},
    \end{equation*}
    where the remainder term satisfies $\|\mathcal{Q}\|_2 \leq \dfrac{L_{J_F}}{2} \|\pi(\theta) - \theta\|_2^2$.
    
    Recalling the definition $n(\theta) = \theta - \pi(\theta)$, we thus obtain
    \begin{equation}\label{equ:F around projection}
        F(\theta) = J_F(\theta) n(\theta) - \mathcal{Q}.
    \end{equation}
    Applying the orthogonal projection operator $P_{\mathcal{R}(\theta)^\perp}$ to both sides of~\cref{equ:F around projection} yields
    \begin{equation*}
        P_{\mathcal{R}(\theta)^\perp} F(\theta) = \underbrace{P_{\mathcal{R}(\theta)^\perp} [J_F(\theta) n(\theta)]}_{=0} - P_{\mathcal{R}(\theta)^\perp} \mathcal{Q} = - P_{\mathcal{R}(\theta)^\perp} \mathcal{Q}.
    \end{equation*}
    Taking norms and using the property that $\|P_{\mathcal{R}(\theta)^\perp}\|_2 \le 1$ gives
    \begin{equation}\label{equ:norm of projection for the residual}
        \|P_{\mathcal{R}(\theta)^\perp} F(\theta)\|_2 \le \|\mathcal{Q}\|_2 \le \frac{L_{J_F}}{2} \|n(\theta)\|_2^2.
    \end{equation}
    Finally, substituting the upper bound for $\|n(\theta)\|_2$ into~\cref{equ:norm of projection for the residual} yields:
    \begin{equation*}
        \|P_{\mathcal{R}(\theta)^\perp} F(\theta)\|_2 \le \frac{L_{J_F}}{2} \left( \frac{2}{\sigma_{\min}(J_F(\theta_0))} \|F(\theta)\|_2 \right)^2.
    \end{equation*}
    Setting $C_{\perp} := \dfrac{2 L_{J_F}}{\sigma_{\min}^2(J_F(\theta_0))}$ completes the proof.
\end{proof}

\section{Auxiliary lemmas for Section 3}
\label{subsubsec:auxiliary lemmas}
In this subsection, we will collect several lemmas from \cite{zhou-2024-convergence_of_adagrad}, whose proofs are given in~\cref{sec:proof of lemmas} for the sake of completeness. These technical results will be used repeatedly in the proof of~\cref{thm:convergence of the modified algorithm}. For notational convenience, we define
\begin{equation}\label{equ:construction of At}
    A_t=\alpha\widehat{D}_t^{-\frac{1}{2}},
\end{equation}a quantity that will be appear frequently in the subsequent analysis.

The operator norms of the diagonal matrices $\big\{A_t^{-1/2}\big\}_{t\ge 1}$ are uniformly bounded. Specifically,

\begin{lemma}\label{lem:2 norm for At}
     The sequence of diagonal matrices $\big\{A_t^{-1/2}\big\}_{t\geq 1}$ is uniformly bounded. In particular,
     \begin{equation*}
         \Big\|A_t^{-\frac{1}{2}}\Big\|_2\leq \sqrt[4]{\frac{C_{\infty}^2M^2+\epsilon}{\alpha^2}},
     \end{equation*}
where $M$ is the constant in Assumption~\ref{assump:bounded gradient}, 
$\alpha$ is the stepsize used in~\cref{alg:svd-adam_algorithm}, 
and $\epsilon$ is the parameter introduced in~\cref{equ:construction of widehat V}.
\end{lemma}

\begin{proof}[Proof]
Recall that by definition~\cref{equ:construction of At} of $A_t$, we have
\begin{equation*}
A_t^{-\frac{1}{2}}=\alpha^{-\frac{1}{2}}\widehat D_t^{\frac{1}{4}},
\end{equation*}
where $\widehat{D}_t$ denotes the preconditioner defined via the modified second moment in~\cref{equ:construction of widehat V}. Since $\widehat D_t^{{1}/{4}}$ is diagonal with diagonal entries $\left(\widehat{v}_{t,i}+\epsilon\right)^{1/4}$, and the spectral norm of a positive diagonal matrix equals its largest diagonal entry. It follows that
\begin{equation*}
\big\|A_t^{-\frac{1}{2}}\big\|_2
=
\alpha^{-\frac{1}{2}}
\big\|\widehat D_t^{\frac{1}{4}}\big\|_2
=
\alpha^{-\frac{1}{2}}
\max_{i}\left(\widehat v_{t,i}+\epsilon\right)^{\frac{1}{4}}.
\end{equation*}
Using the uniform bound $\|\lambda_t\|_{\infty}\leq C_{\infty}M$ in~\cref{equ:lambdat is bounded} together with the construction of $\widehat{v}_t$, we have  $\widehat v_{t,i}\le C_{\infty}^2M^2$ for all $t,i$. Consequently,
\begin{equation*}
\big\|A_t^{-\frac{1}{2}}\big\|_2
\leq
\alpha^{-\frac{1}{2}}
\left(C_{\infty}^2M^2+\epsilon\right)^{\frac{1}{4}}
=
\sqrt[4]{\frac{C_{\infty}^2M^2+\epsilon}{\alpha^2}},
\end{equation*}
which establishes the desired uniform bound.
\end{proof}

The following descent lemma follows from the $L$-smoothness of $f$ in Assumption~\ref{assump:L-smoothness}.

\begin{lemma}\label{lem:descent lemma}
For any $x,y\in\mathbb{R}^m,$ we have 
    \begin{equation}
        f(y)\leq f(x)+\langle \nabla f(x), y-x\rangle +\frac{L}{2}\|y-x\|^2_2,
    \end{equation}
    where $L$ is the Lipschitz constant of $\nabla f$.
\end{lemma}

\begin{proof}[Proof]
    We define $\phi(t):[0,1]\to\mathbb{R}$ as
    \begin{equation*}\label{equ:definition of phi(t)}
        \phi(t)= f((1-t)x+ty).
    \end{equation*}
    By the fundamental theorem of calculus, we obtain
    \begin{align*}
        \phi(1)-\phi(0)&=\int_{0}^1\phi^{\prime}(t)\mathrm{d}t\\&=\int_{0}^1\langle\nabla f((1-t)x+ty),y-x\rangle\mathrm{d}t\\&=\int_0^1\langle\nabla f(x), y-x\rangle\mathrm{d}t+\int_0^1\langle \nabla f((1-t)x+ty)-\nabla f(x), y-x\rangle\mathrm{d}t\\&\leq \langle\nabla f(x), y-x\rangle+ \int_{0}^1\|\nabla f((1-t)x+ty)-\nabla f(x)\|_2\|y-x\|_2 \mathrm{d}t\\&\leq\langle\nabla f(x), y-x\rangle+\int_0^1tL\|y-x\|_2^2\mathrm{d}t\\&=\langle\nabla f(x), y-x\rangle + \frac{L}{2}\|y-x\|_2^2.
    \end{align*}
Notice the fact that $\phi(0)=x,\phi(1)=y$, the above reasoning completes the proof.
\end{proof}

For the sake of completeness, we recall several auxiliary lemmas from~\cite{zhou-2024-convergence_of_adagrad}, which will be used in our subsequent analysis.

\begin{lemma}\label{lem:expression for the difference of z_t}
     We have the expression of $z_{t+1}-z_t$:
        \begin{equation*}\label{equ:the difference of z_t}
            z_{t+1}-z_t=\begin{cases}
                \dfrac{\beta_1}{1-\beta_1}(A_{t-1}-A_t)m_{t-1}-A_t\lambda_t, &t\geq 2,\vspace{0.3cm}\\ -A_1\lambda_1, &t=1.
            \end{cases}
        \end{equation*}
\end{lemma}

\begin{proof}[Proof]
For $t=1,$ notice that
\begin{equation}\label{equ:calculation of z_2 and z_1}
\begin{split}
    z_1 &= \theta_1+\frac{\beta_1}{1-\beta_1}(\theta_1-\theta_0)=\theta_1,\\
    z_2 &= \theta_2+\frac{\beta_1}{1-\beta_1}(\theta_2-\theta_1).
\end{split}
\end{equation}
Therefore, we have
\begin{equation*}
    z_2-z_1=\frac{1}{1-\beta_1}(\theta_2-\theta_1)=-\frac{1}{1-\beta_1}A_1m_1=-A_1\lambda_1.
\end{equation*}
For $t\geq 2$, it follows that
\begin{equation}\label{equ:calculation of the difference of z_{t+1} and z_t}
    \begin{split}
        z_{t+1}-z_t&=\frac{1}{1-\beta_1}(\theta_{t+1}-\theta_t)-\frac{\beta_1}{1-\beta_1}(\theta_t-\theta_{t-1})
        \\&= -\frac{1}{1-\beta_1}A_tm_t+\frac{\beta_1}{1-\beta_1}A_{t-1}m_{t-1}
        \\&=-\frac{1}{1-\beta_1}A_t[\beta_1m_{t-1}+(1-\beta_1)\lambda_t]+\frac{\beta_1}{1-\beta_1}A_{t-1}m_{t-1}
        \\&=\frac{\beta_1}{1-\beta_1}(A_{t-1}-A_t)m_{t-1}-A_t\lambda_t.
    \end{split}
\end{equation}
Combining \cref{equ:calculation of z_2 and z_1} and \cref{equ:calculation of the difference of z_{t+1} and z_t} proves \cref{lem:expression for the difference of z_t}.
\end{proof}

\begin{lemma}\label{lem:upper bound for the norm of the difference of z_t}
      Let $\{z_t\}$ be the auxiliary sequence constructed in \cref{equ:the construction of z_t}, we then have
      \begin{equation*}\label{distance between z_t is bouned by distance between theta_t plus extra term}
          \|z_{t+1}-z_{t}\|_2\leq \frac{\beta_1}{1-\beta_1}\|\theta_t-\theta_{t-1}\|_2+\|A_t\lambda_t\|_2,\quad\forall t\geq 2.
      \end{equation*}
    \end{lemma}

\begin{proof}[Proof]
    By \cref{lem:expression for the difference of z_t}, we obtain
    \begin{equation*}
        z_{t+1}-z_t=\frac{\beta_1}{1-\beta_1}(A_{t-1}-A_t)m_{t-1}-A_t\lambda_t,\quad\forall t\geq 2.
    \end{equation*}
    Since each $A_t$ is invertible for each $t\geq 1$ by construction, the above equation can be equivalently reformulated as
    \begin{equation}\label{equ:the modified version of the difference of z_t}
        z_{t+1}-z_t=\frac{\beta_1}{1-\beta_1}(I-A_tA_{t-1}^{-1})A_{t-1}m_{t-1}-A_t\lambda_t.
    \end{equation}
    Taking Euclidean norm on both sides of \cref{equ:the modified version of the difference of z_t}and invoking the triangle inequality together with the definition of the operator norm, we deduce that
    \begin{equation}\label{equ:the upper bound for the norm of the difference of z_t}
        \begin{split}
            \|z_{t+1}-z_t\|_2\leq \frac{\beta_1}{1-\beta_1}\|I-A_tA_{t-1}^{-1}\|_{2}\|A_{t-1}m_{t-1}\|_2+\|A_{t}\lambda_t\|_2,
        \end{split}
    \end{equation}
    
    Moreover, since $\{A_t\}_{t\geq 1}$ forms a non-decreasing sequence of diagonal matrices, the diagonal entries of $A_{t}A_{t-1}^{-1}$ are bounded above by $1$, which immediately yields the uniform estimate
    \begin{equation}\label{equ:the upper bound for the operator norm}
        \|I-A_{t}A_{t-1}^{-1}\|_{2}\leq 1.
    \end{equation}
    On the other hand, the update rule for $\theta$ ensures that
    \begin{equation}\label{equ:the update formula for parameter theta}
        \theta_{t}-\theta_{t-1}=-A_{t-1}m_{t-1}.
    \end{equation}
    Consequently, combining  \cref{equ:the upper bound for the norm of the difference of z_t}, \cref{equ:the upper bound for the operator norm}, and \cref{equ:the update formula for parameter theta}, we finally get
    \begin{equation*}
        \|z_{t+1}-z_t\|_2\leq \frac{\beta_1}{1-\beta_1}\|\theta_{t}-\theta_{t-1}\|_2+\|A_{t}\lambda_t\|_2.
    \end{equation*}
\end{proof}

    \begin{lemma}\label{lem:upper bound for the norm of difference of gradient at z_t and theta_t}
        Let $\{z_t\}$ be the auxiliary sequence constructed in \cref{equ:the construction of z_t}, we then have
        \begin{equation*}\label{the gradient norm difference evaluated at z_t and theta_t}
            \|\nabla f(z_t)-\nabla f(\theta_t)\|_2\leq L\left(\frac{\beta_1}{1-\beta_1}\right)\|\theta_t-\theta_{t-1}\|_2,\quad\forall t\geq 2.      \end{equation*}
    \end{lemma}

\begin{proof}[Proof]

By the construction \cref{equ:the construction of z_t}, we have
\begin{equation*}
    \|z_t-\theta_t\|_2\leq \frac{\beta_1}{1-\beta_1}\|\theta_{t}-\theta_{t-1}\|_2
\end{equation*}
Since $f$ is $L$-smooth, we thus obtain
\begin{equation*}
    \begin{split}
        \|\nabla f(z_t)-\nabla f(\theta_t)\|_2\leq L\|z_t-\theta_t\|_2\leq L\left(\frac{\beta_1}{1-\beta_1}\right)\|\theta_t-\theta_{t-1}\|,\quad\forall t\geq 2.
    \end{split}
\end{equation*}
\end{proof}

Following the notation in~\cite{zhou-2024-convergence_of_adagrad}, 
for a sequence $\{g_j\}_{j=1}^t$ we denote by $g_{j,i}$ its $i$-th coordinate and set
\begin{equation*}
     g_{1:t,i} = [ g_{1,i}, g_{2,i},\ldots, g_{t,i}]^\top .
\end{equation*}
For the sequence $\{\lambda_j\}_{j=1}^t$ in Algorithm~\ref{alg:svd-adam_algorithm}, 
we use the same convention and write
\begin{equation*}
    \lambda_{1:t,i} = [ \lambda_{1,i}, \lambda_{2,i},\ldots,\lambda_{t,i}]^\top .
\end{equation*}

    \begin{lemma}\label{lem:relation for the cumulative bound of lambda and gradient}
        Let $\{g_t\}_{t\geq 1}$ and $\{\lambda_t\}_{t\geq 1}$ be sequences generated by the modified Algorithm~\ref{alg:svd-adam_algorithm}. Then
        \begin{align}
             &\sum_{i=1}^{m}\|\lambda_{1:T,i}\|_2\leq \sqrt{m}C_2\left(\sum_{i=1}^m\|g_{1:T,i}\|_2^2\right)^{{1}/{2}}\label{equ:upper bound for sum of lambda}
             \\
             &\sum_{i=1}^m\|\lambda_{1:T,i}\|_2^2\leq C_2^2\sum_{i=1}^m\|g_{1:T,i}\|_2^2
             \label{equ:upper bound for quadratic sum of lambda}
        \end{align}
    \end{lemma}

\begin{proof}[Proof]

By the Cauchy-Schwarz inequality,
\begin{equation*}
    \sum_{i=1}^m \|\lambda_{1:T,i}\|_2
\leq \sqrt{m} \left( \sum_{i=1}^m \|\lambda_{1:T,i}\|_2^2 \right)^{\frac{1}{2}}
= \sqrt{m} \left( \sum_{t=1}^T \|\lambda_t\|_2^2 \right)^{\frac{1}{2}}.
\end{equation*}
Since $\lambda_t = B_t g_t$ and $\|B_t\|_2 \le C_2$, we have
\begin{equation*}
    \|\lambda_t\|_2 \le \|B_t\|_2 \|g_t\|_2 \le C_2 \|g_t\|_2.
\end{equation*}
Substituting this bound gives
\[
\sum_{i=1}^m \|\lambda_{1:T,i}\|_2
\le \sqrt{m}\, C_2 \left( \sum_{t=1}^T \|g_t\|_2^2 \right)^{1/2}
= \sqrt{m}\, C_2 \left( \sum_{i=1}^m \|g_{1:T,i}\|_2^2 \right)^{1/2},
\]
which proves~\cref{equ:upper bound for sum of lambda}. 
For~\cref{equ:upper bound for quadratic sum of lambda}, note that
\begin{equation*}
    \begin{split}
        \sum_{i=1}^m \|\lambda_{1:T,i}\|_2^2
&= \sum_{t=1}^T \|\lambda_t\|_2^2
\leq \sum_{t=1}^T \|B_t\|_2^2 \|g_t\|_2^2
\\
&\leq C_2^2 \sum_{t=1}^T \|g_t\|_2^2
= C_2^2 \sum_{i=1}^m \|g_{1:T,i}\|_2^2.
    \end{split}
\end{equation*}
This completes the proof.
\end{proof}

The following two lemmas, adapted from Lemma~6.5 in~\cite{zhou-2024-convergence_of_adagrad}, provide bounds on $\{A_tm_t\}_{t\ge1}$ and $\{A_t\lambda_t\}_{t\geq 1}$. 
More specifically, \cref{lem:the upper bound for the Quadratic Variation of theta_t} establishes quadratic-variation bounds for $\{A_t m_t\}_{t\geq 1}$ and $\{A_t\lambda_t\}_{t\geq 1}$, which are controlled by $\sum_{i=1}^m \|\lambda_{1:T,i}\|_2$.  
In contrast, \cref{lem:the upper bound for the variation of theta_t} provides bounds on their total variation, which depend on $\left(\sum_{i=1}^m \|\lambda_{1:T,i}\|_2 \right)^{1/2}$.

    \begin{lemma}\label{lem:the upper bound for the Quadratic Variation of theta_t}
        Assume that $\beta_1<\sqrt{\beta_2}$. If $\{\theta_t\}_{t\geq 1}$ are parameters generated by the modified Algorithm~\ref{alg:svd-adam_algorithm}, then
        \begin{align}   
        \sum_{t=1}^T\|A_tm_t\|^2_2&\leq \frac{\alpha^2(1-\beta_1)}{2\epsilon^{\frac{1}{2}}(1-\beta_2)^{\frac{1}{2}}(1-\gamma)}T^{\frac{1}{2}}\sum_{i=1}^m\|\lambda_{1: T,i}\|_2 \label{variation of Atmt} 
        \\&=(1-\beta_1)\alpha^2KT^{\frac{1}{2}}\sum_{i=1}^m\|\lambda_{1: T,i}\|_2,\notag
        \\ 
        \sum_{t=1}^T\|A_t\lambda_t\|_2^2&\leq \frac{\alpha^2}{2\epsilon^{\frac{1}{2}}(1-\beta_2)^{\frac{1}{2}}(1-\gamma)}T^{\frac{1}{2}}\sum_{i=1}^m\|\lambda_{1: T,i}\|_2
        \\&=\alpha^2KT^{\frac{1}{2}}\sum_{i=1}^m\|\lambda_{1: T,i}\|_2,\notag 
        \label{variation of Atlambdat}
        \end{align}
        where the constant $K$ is defined as
        \begin{equation*}
            K=\frac{1}{2\epsilon^{\frac{1}{2}}(1-\beta_2)^{\frac{1}{2}}(1-\gamma)}.
        \end{equation*}
    \end{lemma}

\begin{proof}[Proof]
Let $\widehat v_{t,i}$, $m_{t,i}$, and $\lambda_{t,i}$ denote the $i$-th coordinates of
$\widehat v_t$, $m_t$, and $\lambda_t$, respectively. 
Since $A_t = \alpha_t \widehat D_t^{-1/2}$, we have
\begin{equation}
\label{equ:At_mt_def}
\|A_t m_t\|_2^2
= \alpha_t^2 \sum_{i=1}^{m} \frac{m_{t,i}^2}{\widehat v_{t,i} + \epsilon}
\le
\frac{\alpha_t^2}{2\,\epsilon^{1/2}}
\sum_{i=1}^{m} \frac{m_{t,i}^2}{v_{t,i}^{1/2}},
\end{equation}
where we used $a+b\ge 2\sqrt{ab}$ and $\widehat v_{t,i}\ge v_{t,i}$. Expanding $m_t$ and $v_t$ recursively,
\[
m_t = (1-\beta_1)\sum_{j=1}^{t}\beta_1^{t-j}\lambda_j,
\qquad
v_t = (1-\beta_2)\sum_{j=1}^{t}\beta_2^{t-j}\lambda_j^2.
\]
Taking the $i$-th coordinate and substituting into \cref{equ:At_mt_def} gives 
\begin{equation} \label{equ:At_mt_expand} 
\|A_t m_t\|_2^2 \leq \frac{\alpha_t^2(1-\beta_1)^2}{2\epsilon^{1/2}(1-\beta_2)^{1/2}} \sum_{i=1}^{m} \left(\sum_{j=1}^{t}\beta_1^{t-j}\lambda_{j,i}\right)^{2} \left(\sum_{j=1}^{t}\beta_2^{t-j}\lambda_{j,i}^2\right)^{-1/2}.
\end{equation}
Applying the Cauchy-Schwarz inequality to \cref{equ:At_mt_expand}
\begin{equation}\label{equ:At_mt_cauchy}
\left(\sum_{j=1}^{t}\beta_1^{t-j}\lambda_{j,i}\right)^{2}
\le
\left(\sum_{j=1}^{t}\beta_1^{t-j}\right)
\left(\sum_{j=1}^{t}\beta_1^{t-j}\lambda_{j,i}^{2}\right)
\le\frac{1}{1-\beta_1}
\sum_{j=1}^{t}\beta_1^{t-j}\lambda_{j,i}^2.
\end{equation}
Substituting \cref{equ:At_mt_cauchy} into \cref{equ:At_mt_expand} yields
\begin{equation}
\label{equ:At_mt_cauchy_sub_fixed}
\|A_t m_t\|_2^2
\le
\frac{\alpha_t^2(1-\beta_1)}{2\,\epsilon^{1/2}(1-\beta_2)^{1/2}}
\sum_{i=1}^{m}\frac{\sum_{j=1}^t\beta_1^{t-j}\lambda_{j,i}^{2}}
{\big(\sum_{k=1}^{t}\beta_2^{\,t-k}\lambda_{k,i}^2\big)^{1/2}}.
\end{equation}
For each $i$ and $j\le t$,
\[
\Big(\sum_{k=1}^{t}\beta_2^{\,t-k}\lambda_{k,i}^2\Big)^{1/2}
\ge \big(\beta_2^{\,t-j}\lambda_{j,i}^2\big)^{1/2}
=\beta_2^{\frac{t-j}{2}}|\lambda_{j,i}|,
\]
hence with $\gamma:=\beta_1/\beta_2^{1/2}$,
\begin{equation}
\label{equ:ratio_gamma_step}
\frac{\beta_1^{t-j}\lambda_{j,i}^{2}}
{\Big(\sum_{k=1}^{t}\beta_2^{\,t-k}\lambda_{k,i}^2\Big)^{1/2}}
\le \gamma^{\,t-j}\,|\lambda_{j,i}|.
\end{equation}
Combining \eqref{equ:At_mt_cauchy_sub_fixed} and \eqref{equ:ratio_gamma_step},
\begin{equation}
\label{equ:At_mt_gamma_sum_pre}
\|A_t m_t\|_2^2
\le
\frac{\alpha_t^2(1-\beta_1)}{2\,\epsilon^{1/2}(1-\beta_2)^{1/2}}
\sum_{i=1}^{m}\sum_{j=1}^{t}\gamma^{\,t-j}|\lambda_{j,i}|.
\end{equation}
Summing \eqref{equ:At_mt_gamma_sum_pre} over $t=1,\dots,T$ and noticing that $\gamma<1$,
\begin{equation}
\label{equ:At_mt_sum_general_alpha}
\sum_{t=1}^{T}\|A_t m_t\|_2^2
\leq
\frac{(1-\beta_1)}{2\epsilon^{1/2}(1-\beta_2)^{1/2}}
\sum_{i=1}^{m}\sum_{j=1}^{T}\Big(\sum_{t=j}^{T}\alpha_t^2\,\gamma^{t-j}\Big)|\lambda_{j,i}|.
\end{equation}
In particular, since $\alpha_t\equiv\alpha$, $\sum_{t=j}^{T}\alpha^2\gamma^{\,t-j}\le \alpha^2/(1-\gamma)$, so
\begin{equation}
\label{equ:At_mt_gamma_final}
\sum_{t=1}^{T}\|A_t m_t\|_2^2
\leq
\frac{\alpha^2(1-\beta_1)}{2\,\epsilon^{1/2}(1-\beta_2)^{1/2}(1-\gamma)}
\sum_{i=1}^{m}\sum_{j=1}^{T}|\lambda_{j,i}|.
\end{equation}
Finally, apply the Cauchy-Schwarz inequality again,
\begin{equation*}
\sum_{i=1}^{m}\sum_{j=1}^{T}|\lambda_{j,i}|
\le
T^{1/2}\sum_{i=1}^{m}\|\lambda_{1:T,i}\|_2.
\end{equation*}
Thus,
\begin{equation*}
\sum_{t=1}^{T}\|A_t m_t\|_2^2
\le
\frac{T^{1/2}\alpha^2(1-\beta_1)}{2\,\epsilon^{1/2}(1-\beta_2)^{1/2}(1-\gamma)}
\sum_{i=1}^{m}\|\lambda_{1:T,i}\|_2.
\end{equation*}
This completes the proof of \cref{equ:variation for Atmt}. For the proof of \cref{equ:variation for Atlambdat}, we obtain
\begin{equation*}
\sum_{t=1}^{T}\|A_t\lambda_t\|_2^2
\le
\frac{T^{1/2}\alpha^2}{2\,\epsilon^{1/2}(1-\beta_2)^{1/2}}
\sum_{i=1}^{m}\|\lambda_{1:T,i}\|_2.
\end{equation*}

\end{proof}

    \begin{lemma}\label{lem:the upper bound for the variation of theta_t}
        Assume that $\beta_1<\sqrt{\beta_2}$. If $\{\theta_t\}_{t\geq 1}$ are parameters generated by the modified Algorithm~\ref{alg:svd-adam_algorithm}, then
        \begin{align}   
        \sum_{t=1}^T\|A_tm_t\|_2&\leq \frac{\alpha(1-\beta_1)^{\frac{1}{2}}}{2^{\frac{1}{2}}\epsilon^{\frac{1}{4}}(1-\beta_2)^{\frac{1}{4}}(1-\gamma)^{\frac{1}{2}}}T^{\frac{3}{4}}\left(\sum_{i=1}^m\|\lambda_{1: T,i}\|_2\right)^{\frac{1}{2}}\label{equ:variation for Atmt} 
        \\
        &=\alpha[(1-\beta_1)K]^{\frac{1}{2}}T^{\frac{3}{4}}\left(\sum_{i=1}^m\|\lambda_{1: T,i}\|_2\right)^{\frac{1}{2}},\notag
        \\ 
        \sum_{t=1}^T\|A_t\lambda_t\|_2&\leq \frac{\alpha}{2^{\frac{1}{2}}\epsilon^{\frac{1}{4}}(1-\beta_2)^{\frac{1}{4}}(1-\gamma)^{\frac{1}{2}}}T^{\frac{3}{4}}\left(\sum_{i=1}^m\|\lambda_{1: T,i}\|_2\right)^{\frac{1}{2}},
        \label{equ:variation for Atlambdat}
        \\
        &=\alpha K^{\frac{1}{2}}T^{\frac{3}{4}}\left(\sum_{i=1}^m\|\lambda_{1: T,i}\|_2\right)^{\frac{1}{2}}.\notag
        \end{align}
    \end{lemma}

\begin{proof}[Proof]
    By \cref{lem:the upper bound for the Quadratic Variation of theta_t}, we have already obtained
    \begin{equation}\label{equ:qudratic variation in proof of lemma}
        \sum_{t=1}^T\|A_tm_t\|^2_2\leq \frac{\alpha_t^2(1-\beta_1)}{2\epsilon^{\frac{1}{2}}(1-\beta_2)^{\frac{1}{2}}(1-\gamma)}T^{\frac{1}{2}}\sum_{i=1}^m\|\lambda_{1: T,i}\|_2.
    \end{equation}
Applying the Cauchy-Schwartz inequality to \cref{equ:qudratic variation in proof of lemma}, we get
\begin{equation*}
    \left(\sum_{t=1}^T\|A_tm_t\|_2\right)^2\leq T\sum_{t=1}^T\|A_tm_t\|_2^2.
\end{equation*}
This implies \cref{equ:variation for Atmt} immediately. \cref{equ:variation for Atlambdat} can be proven in the same way.
\end{proof}

\bibliographystyle{abbrv}
\bibliography{references}